\tikzset{
    >=stealth,
    every picture/.style={thick},
    graphs/every graph/.style={empty nodes},
}
\tikzstyle{vertex}=[
\tikzstyle{printersafe}=[decoration={snake,amplitude=0pt}]
\newcommand{\Q}{\mathbb{Q}}
\renewcommand{\qq}{\mathbb{Q}}
\newcommand{\rr}{\mathbb{R}}
\newtheorem{introthm}{Theorem}
\newtheorem{introcor}{Corollary}
\newtheorem{theorem}{Theorem}[section]
\newtheorem{lemma}[theorem]{Lemma}
\newtheorem{proposition}[theorem]{Proposition}
\newtheorem{corollary}[theorem]{Corollary}
\newtheorem{definition}[theorem]{Definition}
\newtheorem{remark}[theorem]{Remark}
\theoremstyle{remark}
\numberwithin{equation}{section}
\begin{document}

\title[Weak Zariski decompositions and termination of flips]{On weak Zariski decompositions and termination of flips}

\author[C.~Hacon]{Christopher Hacon}
\address{
Department of Mathematics, University of Utah, 155 S 1400 E, JWB 233,
Salt Lake City, UT 84112, USA}
\email{hacon@math.utah.edu}

\author[J.~Moraga]{Joaqu\'in Moraga}
\address{
Department of Mathematics, University of Utah, 155 S 1400 E, JWB 321,
Salt Lake City, UT 84112, USA}
\email{moraga@math.utah.edu}

\subjclass[2010]{Primary 14E30, 
Secondary 14F18.}

\thanks{The first author was partially supported by NSF research grants no: DMS-1300750, DMS-1265285 
and by a grant from the Simons Foundation; Award Number: 256202. He would also like
to thank the Mathematics Department and the Research Institute for Mathematical Sciences,
located Kyoto University.}

\begin{abstract}
We prove that termination of lower dimensional flips for generalized klt pairs 
implies termination of flips for log canonical generalized pairs with a weak Zariski decomposition.
Under the same hypothesis we prove that the existence of weak Zariski decompositions for pseudo-effective log canonical pairs
implies the existence of weak Zariski decompositions for pseudo-effective generalized log canonical pairs. 
As an application, we prove the termination of any minimal model program for generalized log canonical
pseudo-effective $4$-folds.
\end{abstract}

\maketitle

\setcounter{tocdepth}{1}
\tableofcontents

\section*{Introduction}
One of the main goals of the minimal model program is to show that given a $\qq$-factorial klt pair $(X,B)$ such that $K_X+B$ is pseudo-effective (resp. not pseudo-effective), then there exists a finite sequence of divisorial contractions and flips $$X\dasharrow X_1\dasharrow X_2\dasharrow \ldots \dasharrow X_n$$ such that $(X_n,B_n)$ is a minimal model (resp. there is a Mori fiber space $X_n\to Y$ and in particular $-(K_{X_n}+B_n)$ is ample over $Y$), where $B_n$ is the strict transform of $B$ on $X_n$. We refer the reader to \cite{KM98} for the details of the minimal model program.
After \cite{BCHM}, it is known that the above sequence of flips and divisorial contractions always exists and the only remaining question is wether it terminates after finitely many steps. It is well known that any such sequence can have only finitely many divisorial contractions and hence the main open question is if there are no infinite sequences of flips.
A flip $X\dasharrow X^+$ is a small birational map of $\qq$-factorial varieties, projective over a variety $W$ such that $\rho(X/W)=\rho (X^+/W)=1$ and both $-(K_X+B)$ and $K_{X^+}+B^+$  are ample over $W$ where $B^+$ is the strict transform of $B$.
As a consequence of the negativity lemma, it is easy to see that flips improve certain singularity invariants known as log discrepancies. More precisely, if $X\dasharrow X^+$ is a flip, then we have the following inequality $a_E(X,B)\leq a_E(X^+,B^+)$ which is strict if and only if the center of $E$ is contained in the flipping locus i.e. the exceptional locus of the flipping contraction $X\to W$.
Shokurov has shown \cite{Shok04} that certain natural conjectures concerning log descrepancies can be used to prove
termination of flips in arbitrary dimension.
These conjectures are the ascending chain condition for minimal log discrepancies and the semicontinuity for minimal log discrepancies.
Unluckily these conjectures are very subtle and not well understood in dimension $\geq 3$.
In \cite{BCHM} a different approach is introduced. Instead of trying to prove termination of arbitrary sequences of flips, the authors show termination of specific kinds of minimal model programs known as minimal model programs with scaling. This approach is successful whenever $K_X+B$ is big or $B$ is big or $K_X+B$ is not pseudo-effective. In particular the existence of minimal models for klt pairs of log general type follows as well as the existence of Mori fiber spaces for klt pairs $(X,B)$ such that $K_X+B$ is not pseudo-effective.
This approach does not seem to shed any light on the termination of arbitrary sequences of flips.

In~\cite{Bir07}, Birkar introduced a new philosophy to prove termination of flips for klt pairs such that $K_X+B$ is pseudo-effective. In this case one expects that $K_X+B\equiv G\geq 0$.  Birkar shows  that assuming the  ascending chain condition  conjecture  for log canonical thresholds and the termination of flips for klt pairs of dimension $\leq d-1$, then flips terminate for any $d$-dimensional log canonical pair $(X,B)$ such that $K_X+B\equiv G\geq 0$.
The  ascending chain condition  conjecture for lct's was proved by Hacon, M$^{\rm c}$Kernan and Xu in ~\cite{HMX14}, and later extended to the context of generalized pairs by Birkar and Zhang in ~\cite{BZ16}. In \cite{Shok09}, Shokurov shows that termination of flips with scaling holds for pseudo-effective klt fourfolds and in particular these pairs admit a minimal model and hence a Zariski decomposition. 
In~\cite{Mor18}, the second author proves  termination of pseudo-effective $4$-fold flips by combining the results of \cite{Bir07}, \cite{Shok09} and \cite{BZ16}.
Following this philosophy, in this article we prove that the existence of a weak Zariski decomposition for a generalized log canonical pair 
can be used to reduce termination of flips for such pairs to lower dimensional terminations. More precisely, we prove the following theorem:

\begin{introthm}\label{termination}
Assume termination of flips for generalized klt pairs of dimension at most $ n-1$.
Let $(X/Z,B+M)$ be a generalized log canonical pair of dimension $n$
admitting a weak Zariski decomposition.
Assume that $B$ is a $\qq$-divisor and $M$ is a $\qq$-Cartier b-divisor nef over $Z$.
Then any minimal model program for $K_X+B+M/Z$ terminates.
\end{introthm}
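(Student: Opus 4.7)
The plan is to adapt Birkar's strategy from \cite{Bir07} for the termination of lc flips with a weak Zariski decomposition to the setting of generalized pairs, in the spirit of \cite{BZ16} and \cite{Mor18}. After replacing $(X/Z, B+M)$ by a $\qq$-factorial generalized dlt modification (and observing that this operation preserves the existence of a weak Zariski decomposition), fix such a decomposition on a sufficiently high model $\pi\colon Y\to X$, namely $\pi^\ast(K_X+B+M)\equiv_Z P+N$ with $P$ nef$/Z$ and $N\geq 0$. Run a $(K_X+B+M)$-MMP over $Z$; since divisorial contractions strictly decrease the relative Picard number, all but finitely many steps are flips $\phi_i\colon X_i\dashrightarrow X_{i+1}$, and we let $P_i, N_i$ be the appropriate traces on a sequence of common resolutions.

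The first key observation is that every flipping extremal ray $R_i$ on $X_i$ must meet $\supp(N_i)$. Indeed, $(K_{X_i}+B_i+M_i)\cdot R_i<0$, while the nefness of $P$ over $Z$ combined with the negativity lemma on a common resolution of $X_i$ and $Y$ yields $P_i\cdot R_i\geq 0$; hence $N_i\cdot R_i<0$. Since the number of components of $N_i$ is bounded by the relative Picard number and components can only disappear under contractions, the set of prime divisors in $\supp(N_i)$ eventually stabilizes, and we may further assume that no component of $N_i$ is contracted in any subsequent flip.

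Next, one derives \emph{special termination} for gen-dlt pairs in dimension $n$ from the assumed termination of gen-klt flips in dimension at most $n-1$, using adjunction for generalized pairs \cite{BZ16}: after finitely many further flips, every flipping locus is disjoint from the non-gklt locus of $(X_i, B_i + M_i)$. Combined with the previous step, this forces each flipping locus to lie inside $\supp(N_i)$ but strictly inside the gen-klt locus. A small perturbation of the coefficients of $B_i$ and $N_i$ then produces, near the flipping locus, a generalized klt pair that shares the same MMP steps; a descending-chain argument on an invariant built from $N_i$ and the log discrepancies (in the style of Shokurov's difficulty), using the strict monotonicity of log discrepancies across a flip, produces the desired contradiction to an infinite sequence of flips.

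The main obstacle is Step 3: converting the qualitative fact that flipping loci live in $\supp(N_i)$ into a genuine descent argument that terminates. One must design a numerical invariant (based on the components and coefficients of $N_i$, or on a Shokurov-type difficulty for the gen-dlt pair) that strictly decreases across every flip after the preparation stage, and verify that this decrease is compatible both with the special termination reduction and with the presence of the nef b-divisor $M$. This is where the generalized adjunction of \cite{BZ16}, the behavior of the b-divisor $M$ under the MMP, and the stability of the weak Zariski decomposition along the MMP play the essential role.
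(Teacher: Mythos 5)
Your proposal correctly sets up the reduction (dlt modification, truncation to flips, the observation that flipping curves must intersect the negative part $N$ negatively, and special termination derived from lower-dimensional termination via generalized adjunction), but the step you yourself flag as ``the main obstacle'' is precisely the heart of the proof, and it is absent. Moreover, the mechanism you gesture at --- a descending invariant in the style of Shokurov's difficulty built from the components of $N_i$ and log discrepancies --- is not how the argument goes, and it is far from clear it can be made to work: there is no obvious DCC set controlling such an invariant, and strict monotonicity of log discrepancies alone does not terminate an infinite sequence.

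What the paper does instead is the following. The invariant is the generalized log canonical threshold of $N_i+P_i$ with respect to $(X_i/Z,B_i+M_i)$, namely $\lambda_i=\mathrm{lct}((X_i/Z,B_i+M_i)\mid N_i+P_i)$, where $N_i+P_i$ is treated as a generalized boundary with boundary part $N_i$ and nef part $P_i$. This threshold is finite unless $K_{X_i}+B_i+M_i$ is already nef (Lemma~\ref{finiteness}), the weak Zariski decomposition descends along quasi-flips by the negativity lemma (Lemma~\ref{WZDpreserved}), and the thresholds form a \emph{non-decreasing} sequence (Lemma~\ref{nondecreasing}); crucially, the set of possible values satisfies the ACC by the Birkar--Zhang generalization of the ACC for log canonical thresholds \cite[Theorem 1.5]{BZ16}. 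Hence the sequence of thresholds stabilizes at some $\lambda$, after which there is a generalized log canonical center of the auxiliary pair $(X/Z,B+M+\lambda(P+N))$ that meets infinitely many flipping loci without ever being contained in them. One then takes a $\qq$-factorial dlt modification $Y$ of this auxiliary pair, lifts the flips to a sequence of dlt flips on $Y$ (using that $Y$ is of Fano type over a neighborhood of the image of each flipping locus, so the needed relative MMPs on $Y$ terminate), and applies special termination (Lemma~\ref{lemma}) to contradict termination of generalized klt flips in dimension at most $n-1$. Without the ACC input and the auxiliary pair at the threshold value, your argument has no well-ordered set in which to ascend or descend, so the proposed proof does not close.
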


Note in particular that in this paper we work with $\qq$-divisors and our results do not apply to the context of $\rr$-divisors.
We also prove that the existence of weak Zariski decompositions for pseudo-effective generalized log canonical pairs follows from the same statement for generalized log canonical pairs.

\begin{introthm}\label{genwzd}
Assume termination of flips for generalized klt pairs of dimension at most $n-1$.
Then the existence of weak Zariski decompositions for pseudo-effective log canonical pairs of dimension $n$
implies the existence of weak Zariski decompositions for pseudo-effective generalized log canonical pairs of dimension $n$.
\end{introthm}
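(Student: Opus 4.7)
\textit{Proof sketch.} The strategy is to reduce the generalized log canonical setting to the ordinary one via an ample perturbation of the nef part $M$, apply the assumed existence of weak Zariski decompositions for ordinary log canonical pairs in dimension $n$ to the perturbed pair, and then use Theorem~1 together with the termination in lower dimension to pass to the limit and recover a weak Zariski decomposition for the original generalized pair.

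First, I would replace $(X/Z, B+M)$ by a $\qq$-factorial generalized dlt modification on which $M$ descends, so that we may assume $M$ is itself a nef $\qq$-Cartier divisor over $Z$ on $X$. Next, I would fix a general ample $\qq$-divisor $A$ on $X$ over $Z$. For each rational $\epsilon \in (0,1)$, since $M + \epsilon A$ is ample over $Z$, one may choose an effective $\qq$-divisor $\Delta_\epsilon \sim_{\qq, Z} M + \epsilon A$ sufficiently general so that $(X, B + \Delta_\epsilon)$ is log canonical (a Bertini-type argument, using that $(X,B+M)$ is generalized dlt and $\epsilon$ is small). Then $(X/Z, B + \Delta_\epsilon)$ is a pseudo-effective ordinary log canonical pair of dimension $n$, since $K_X + B + \Delta_\epsilon \sim_{\qq, Z} K_X + B + M + \epsilon A$.

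By the hypothesis on ordinary pairs, $(X, B + \Delta_\epsilon)$ admits a weak Zariski decomposition; combined with the termination hypothesis and Theorem~1, any $(K_X + B + \Delta_\epsilon)$-MMP over $Z$ terminates in a log minimal model $X_\epsilon^{\min}$. On a common resolution $f_\epsilon: Y_\epsilon \to X$ dominating $X_\epsilon^{\min}$, the negativity lemma produces an explicit decomposition
\[
f_\epsilon^*(K_X + B + \Delta_\epsilon) \equiv_Z P_\epsilon + N_\epsilon, \qquad P_\epsilon \text{ nef over } Z,\ N_\epsilon \geq 0,
\]
which, using $\Delta_\epsilon \sim_{\qq, Z} M + \epsilon A$, rewrites as $f_\epsilon^*(K_X + B + M) \equiv_Z (P_\epsilon - \epsilon f_\epsilon^* A) + N_\epsilon$.

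The main obstacle is the limit $\epsilon \to 0$: one must exhibit a single birational model $Y$ and a single minimal model target $X_0^{\min}$ serving all sufficiently small $\epsilon$. I expect this stabilization to follow from a finite chamber decomposition for the minimal models of the one-parameter family $(X, B + M + tA)$ with $t$ ranging in a compact rational interval, in the spirit of the Shokurov--Birkar finiteness of log models, combined with the rationality of each $\Delta_\epsilon$ and the termination assumption to rule out infinite chains of birational models. Once stabilization is obtained, $P = \lim_{\epsilon \to 0} P_\epsilon$ lies in the closed nef cone, and $N = \lim_{\epsilon \to 0} N_\epsilon$ is effective because the prime divisors supporting each $N_\epsilon$ are controlled by the fixed minimal model; the identity $f^*(K_X + B + M) \equiv_Z P + N$ then gives the desired weak Zariski decomposition for the generalized pair.
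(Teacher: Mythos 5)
The decisive step of your argument --- passing to the limit $\epsilon\to 0$ --- is a genuine gap, and it is exactly the point that a perturbation strategy cannot get past here. For each fixed $\epsilon>0$ the identity $f_\epsilon^*(K_X+B+M)\equiv_Z (P_\epsilon-\epsilon f_\epsilon^*A)+N_\epsilon$ is not a weak Zariski decomposition, since $P_\epsilon-\epsilon f_\epsilon^*A$ is not nef; everything therefore rests on producing a single model serving all sufficiently small $\epsilon$. The finiteness of log models you invoke (Shokurov--Birkar, BCHM Theorem E) requires the boundary to contain a \emph{fixed} big divisor throughout the polytope, and is moreover only known in the klt setting; in your family the big part is $\epsilon A$, which degenerates as $\epsilon\to 0$, so when $K_X+B+M$ is pseudo-effective but not big no finiteness statement applies near $\epsilon=0$. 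Indeed, if the weak log canonical models of $K_X+B+M+\epsilon A$ stabilized for small $\epsilon$, the stable model would be a weak log canonical model of $K_X+B+M$ itself, so your argument would prove the existence of minimal models for arbitrary pseudo-effective generalized klt pairs --- a major open problem that does not follow from termination in dimension $\leq n-1$ together with WZD for lc pairs. Without stabilization the only limit object available is Nakayama's $P_\sigma$, and its failure to be nef on any birational model (Lesieutre's example, cited in the paper) is precisely why the existence of a WZD is a nontrivial condition.

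The paper avoids any limiting process by a case division. If $K_X+B$ is already pseudo-effective, one simply adds the assumed WZD of the ordinary lc pair $(X,B)$ to the tautological decomposition $f^*M=M'+E$, where $M'$ is nef and $E\geq 0$ by the negativity lemma --- no perturbation of $M$ by an ample divisor is needed, and note that your preliminary reduction is itself not free, since a dlt modification does not make $M'$ descend to $X$. If $K_X+B$ is not pseudo-effective, the paper uses the rationality of the pseudo-effective threshold of $M$ and an MMP (which terminates by Theorem~\ref{termination}) to reach a $(K+B+M)$-trivial Mori fibre space on which $M$ is relatively ample, and then applies Filipazzi's generalized canonical bundle formula to descend the problem to a generalized pair on the lower-dimensional base, where induction on the dimension supplies the WZD to be pulled back. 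You would need to replace your limiting step by an argument of this kind.
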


Note that by Theorem~\ref{genwzd} it follows that in Theorem \ref{termination} it suffices to assume that  log canonical pairs of dimension $n$
admit a weak Zariski decomposition. 
Combining Theorem~\ref{termination}, Theorem~\ref{genwzd}, and the existence of minimal models for
pseudo-effective log canonical $4$-folds~\cite{Shok09}, we prove that any minimal model program for a pseudo-effective generalized log canonical $4$-fold terminates. 
This generalizes the main theorem of~\cite{Mor18} from the case of log canonical pairs
to the case of generalized log canonical pairs with $\qq$-divisors.

\begin{introcor}\label{4-fold-generalized-termination}
Let $(X/Z,B+M)$ be a pseudo-effective generalized log canonical $4$-fold.
Then any minimal model program for $(X/Z,B+M)$ terminates.
\end{introcor}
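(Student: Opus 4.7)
The plan is to assemble the two main theorems of the paper with the existence of minimal models from \cite{Shok09}. First I would verify the lower-dimensional input required by both Theorem~\ref{termination} and Theorem~\ref{genwzd} when $n=4$: termination of flips for generalized klt pairs of dimension at most $3$. For ordinary klt $3$-folds this is classical (Shokurov, Kawamata), and the extension to generalized klt $3$-folds is available through the MMP for generalized pairs developed in \cite{BZ16} together with a standard Shokurov difficulty argument. This is the only ingredient not already packaged in the theorems being applied.

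Next, I would produce the missing hypothesis of Theorem~\ref{genwzd} in dimension four. By \cite{Shok09}, every pseudo-effective log canonical $4$-fold $(X/Z, B)$ admits a minimal model; passing to a common log resolution, the pullback of the nef divisor on the minimal model serves as the positive part and the exceptional effective contribution as the negative part of a weak Zariski decomposition of $K_X+B$. Thus the hypothesis of Theorem~\ref{genwzd} with $n=4$ is satisfied, and that theorem yields a weak Zariski decomposition for every pseudo-effective generalized log canonical $4$-fold $(X/Z, B+M)$ with $B$ a $\qq$-divisor and $M$ a $\qq$-Cartier nef-over-$Z$ b-divisor.

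Finally, Theorem~\ref{termination} with $n=4$ applies to $(X/Z, B+M)$: its two hypotheses are precisely the termination for generalized klt $3$-folds established in the first step and the existence of a weak Zariski decomposition established in the second step. The conclusion is that any minimal model program for $K_X+B+M/Z$ terminates, which is exactly the statement of the corollary.

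The main obstacle is essentially bookkeeping in the first step: one must ensure that termination of flips for generalized klt $3$-folds is cleanly available in the generality required by Theorems~\ref{termination} and~\ref{genwzd}, and in particular compatible with the $\qq$-divisor convention fixed in the paper. Once this is pinned down, the corollary is a direct assembly of Theorems~\ref{termination} and~\ref{genwzd} with the minimal model existence of \cite{Shok09}, with no further MMP work needed.
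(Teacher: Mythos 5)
Your proposal is correct and follows essentially the same route as the paper: Shokurov's existence of minimal models for pseudo-effective lc $4$-folds gives the weak Zariski decomposition input, Theorem~\ref{genwzd} upgrades it to generalized pairs, and Theorem~\ref{termination} concludes. The only cosmetic difference is that the paper cites \cite{Mor18} directly for termination of generalized klt $3$-fold flips rather than re-deriving it from \cite{BZ16} and a difficulty argument.
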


Again, in this result we assume all divisors have $\qq$-coefficients.\\

\noindent {\bf Acknowledgement.} We would like to thank C. Birkar for useful discussions and suggestions. 
This paper is deeply influenced by his ideas (especially \cite{Bir07} and \cite{Bir12a}). 
We would like to thank J. Han for many useful comments on a previous draft of this paper. 
We would like to thank V. L\'azic for pointing to us a mistake in a previous version of this paper.

\section{Preliminary results}

\subsection{Weak Zariski decomposition}

\begin{definition}\label{wzd}
{\rm Let $D$ be a $\qq$-Cartier divisor on a normal variety $X/Z$. A {\em weak Zariski decomposition for $D$ over $Z$}
consists of a normal variety $X'$, a projective birational morphism $f \colon X' \rightarrow X$, 
and a numerical equivalence 
\[
f^*D \equiv_{Z} P'+N'
\]
such that the following properties hold
\begin{enumerate}
\item $P'$ is a $\qq$-Cartier divisor which is nef over $Z$, and
\item $N'$ is an effective $\qq$-Cartier divisor.
\end{enumerate}
We will say that a generalized pair $(X/Z,B+M)$ (see Definition \ref{d-glp}) has a {\em weak Zariski decomposition} if
the $\qq$-Cartier divisor $K_X+B+M/Z$ has a weak Zariski decomposition.
In what follows, we may write {\em WZD} instead of weak Zariski decomposition in order to shorten the notation.}
\end{definition}

\begin{remark}{\em
Consider a $\qq$-Cartier divisor $D$ on a projective normal variety $X$.
If there exists a projective $D$-non-positive birational contraction $\pi\colon X\dashrightarrow X_1$,
such that the divisorial push-forward $\pi_*D$ is a nef $\qq$-Cartier divisor,  
then $D$ has a weak Zariski decomposition. 
Indeed, we consider a common resolution of singularities with
projective birational morphisms $f\colon X'\rightarrow X$ and $f_1\colon X'\rightarrow X_1$, 
then we can write 
\[
f^*D= f_1^*(\pi_*D)+E,
\]
where $f_1^*(\pi_*D)$ is nef and $E$ is an effective $\qq$-divisor. 
In particular, a pair $(X,B )$ admitting a minimal model has a weak Zariski decomposition.
Therefore, conjecturally, every pseudo-effective log canonical pair has a WZD.}
\end{remark}

\begin{remark}{\em 
In ~\cite{Zar62}, Zariski proved that any effective divisor $D$ on a smooth projective surface $X$
can be decomposed as $P+N$, where $P$ and $N$ are $\qq$-divisors, $P$ is nef, $N$ is effective, 
the intersection matrix of $N$ is negative definite, and $P\cdot C=0$ for every irreducible componente $C$ of $N$.
In ~\cite{Fuj79}, Fujita generalized the above decomposition to the context of pseudo-effective $\rr$-divisors.

There have been many attempts to generalize the above decomposition for  higher dimensional varieties.
For instance, the Fujita-Zariski decomposition ~\cite{Fuj86} and the CKM-Zariski decomposition (see, e.g., ~\cite{Pro04}).
In~\cite{Bir12a}, assuming the minimal model program for dlt pairs in dimension $d-1$, 
the author proves that the existence of a WZD for a log canonical pair of dimension $d$ is equivalent to the existence of all of the above decompositions.}
\end{remark}

\begin{remark}{\em 
In ~\cite{Les14}, the author constructs a psuedo-effective divisor on the blow up of $\mathbb{P}^3$
at nine very general points, which lies in the closed movable cone and has negative intersections with a set
of curves whose union is Zariski dense. Hence, this pseudo-effective divisor does not admit a weak Zariski decomposition. 
}
\end{remark}

\subsection{Generalized pairs}
In this subsection, we recall the language of generalized pairs.

\begin{definition}\label{d-glp}
{\em A {\em generalized pair} is a triple $(X/Z,B+M)$, such that the following conditions hold
\begin{enumerate}
\item $X$ is a quasi-projective normal algebraic variety,
\item $X\rightarrow Z$ is a projective morphism of normal varieties, 
\item $M$ is the push-forward of a $\qq$-divisor, nef over $Z$, on a higher birational model of $X$ over $Z$, 
\item $B$ is an effective $\qq$-divisor,
\item $K_X+B+M$ is a $\qq$-Cartier divisor.
\end{enumerate}
More precisely, there exists a projective birational morphism $f\colon X' \rightarrow X$ 
from a normal quasi-projective variety $X'$ and a nef $\qq$-Cartier $\qq$-divisor $M'$ such that $M=f_* M'$.
We can define $B'$ via the equation
\[
K_{X'}+B'+M'= f^*(K_X+B+M).	
\]
We will say that $B$ is the {\em boundary part}
and $M$ is the {\em nef part} of the generalized pair.
Observe that $M'$ defines a nef b-Cartier $\qq$-divisor in the sense of~\cite[Definition 1.7.3]{Cor07}.
We will say that this is the {\em nef b-divisor  associated to the generalized pair}.}
\end{definition}

\begin{definition}
{\em Given a projective birational morphism 
$g\colon X''\rightarrow X$ which dominates $X'\rightarrow X$, 
we can write 
\[
K_{X''}+B''+M''= g^*(K_X+B+M),
\]
where $M''$ is the pull-back of $M'$ to $X''$.
Given a prime divisor $E$ on $X''$, we define the {\em log discrepancy} of $(X/Z,B+M)$
at $E$ to be 
\[
a_E(X/Z,B+M)= 1-{\rm coeff}_E(B'')
\]
where ${\rm coeff}_E(B'')$ denotes the coefficient of $B''$ along the prime divisor $E$.
We say that $(X/Z,B+M)$ is 
{\em Kawamata log terminal} or {\em klt} 
if the log discrepancy of $(X/Z,B+M)$ at any prime divisor over $X$ is positive,
and we say that $(X/Z,B+M)$ is 
{\em log canonical} or {\em lc}
if the log discrepancy of $(X/Z,B+M)$ at any prime divisor over $X$ is non-negative. 

By Hironaka's resolution of singularities we may assume that $X''$ is smooth and $B''$ has simple normal crossing support.
In this case, $(X/Z,B+M)$ is klt (resp. lc) iff ${\rm coeff}(B'')<1$ (resp. ${\rm coeff}(B'')\leq 1$). Here ${\rm coeff}(B'')$ denotes the biggest coefficient of the $\qq$-divisor $B''$.
}
\end{definition}

\begin{definition}
{\em 
Let $(X,B+M)$ be a generalized pair and $(X'',B''+M'')$ any log resolution as above. 
A prime divisor $E$ of $X''$ such that ${\rm coeff}_E(B'')\geq 1$ is called 
a {\em generalized non-klt place} of the generalized pair $(X,B+M)$.
Moreover, if ${\rm coeff}_E(B'')=1$ (resp. ${\rm coeff}_E(B'')>1$) 
then we may call it a {\em generalized log canonical place}
(resp. {\em generalized non-lc place}) of the generalized pair on $X''$.
The image of a generalized non-klt place (resp. generalized log canonical place) on $X$ is called a
{\em generalized non-klt center} (resp. generalized log canonical center) of the generalized pair.
A generalized non-klt center of a generalized pair $(X,B+M)$ is said to be {\em minimal}
if it is minimal with respect to inclusion.
}
\end{definition}

\begin{definition}
{\em Let $(X/Z,B+M)$ be a generalized pair. A {\em weak contraction} $\phi \colon X \rightarrow W$ for the generalized pair
is a projective birational contraction over $Z$, such that $-(K_X+B+M)$ is nef over $W$.
A {\em quasi-flip} of $\phi$ is a projective birational map $\pi \colon X \dashrightarrow X^+$ with a projective birational contraction
$\phi^+\colon X^+\rightarrow W$ over $Z$ such that the following conditions hold
\begin{enumerate}
\item the triple $(X^+,B^{+}+M^{+})$ is a generalized log canonical pair,
\item the $\qq$-Cartier $\qq$-divisor $K_{X^{+}}+B^{+}+M^{+}$ is nef over $W$,
\item the inequality $\phi^{+}_{*}B^{+}\leq \phi_{*}B$ of Weil $\qq$-divisors on $W$ holds, and
\item the nef parts $M$ and $M^{+}$ are the trace of a common nef b-Cartier b-divisor.
\end{enumerate}
As usual, the morphism $\phi$ (resp. $\phi^+$) is called the {\em flipping contraction} (resp. {\em flipped contraction}).
We may call $(X/Z,B+M)$ (resp. $(X^{+}/Z,B^{+}+M^{+})$) the {\em flipping generalized pair} (resp. {\em flipped generalized pair})
when the quasi-flip is clear from the context.  
}
\end{definition}

\begin{definition}
{\em 
A quasi-flip $\pi$ is said to be {\em ample} if $-(K_X+B+M)$ and $K_{X^{+}}+B^{+}+M^{+}$
are ample over $W$,
and at most one of the morphisms $\phi$ and $\phi^+$ is the identity.
Observe that if $\phi^+$ is the identity, then $\phi$ is a divisorial contraction, and vice-versa.
In the above case, the quasi-flip will be called a {\em weak divisorial contraction} and
{\em weak divisorial extraction}, respectively.
The quasi-flip $\pi$ is said to be {\em small} if both $\phi$ and $\phi^+$ are small morphisms.
A {\em generalized flip} is an ample small quasi-flip of relative Picard rank one. 
A {\em generalized divisorial contraction} (resp. {\em divisorial extraction}) is a 
weak divisorial contraction (resp. weak divisorial extraction) of relative Picard rank one.
A generalized flip for a generalized pair which is generalized klt (resp. generalized dlt or generalized lc)
on a neighborhood of the flipping contraction is called a {\em generalized klt flip} (resp. {\em generalized dlt flip} or {\em generalized lc flip}).
}
\end{definition}

\begin{definition}
{\em 
A sequence of quasi-flips for a generalized log canonical pair $(X,B+M)$ is said to be {\em under a set satisfying the DCC}
if the coefficients of all the boundary parts $B_i$ in the sequence of quasi-flips belong to a fixed set satisfying the DCC. Moreover, we say that the sequence is with a {\em fixed boundary divisor}
if the boundary divisor on the flipped pair is the divisorial push-forward of the boundary divisor on the 
flipping pair.
}
\end{definition}

\begin{definition}
{\em
A {\em minimal model program} for $K_X+B+M$ over $Z$, is a sequence 
of flips and divisorial contractions for $K_X+B+M$ over $Z$.
A {\em weak minimal model program} for $K_X+B+M$ over $Z$, 
is a sequence of ample quasi-flips for $K_X+B+M$ over $Z$.}
\end{definition}

The following proposition is well-known to experts (see, e.g. ~\cite[Monotonicity]{Shok04}).

\begin{proposition}\label{monotonicity}
Given an ample quasi-flip $\pi \colon X \dashrightarrow X^+$ for generalized log canonical pairs 
$(X/Z,B+M)$ and $(X^+/Z,B^{+}+M^{+})$ over $Z$, with flipping contraction 
$\phi \colon X \rightarrow W$, and a prime divisor $E$ over $X$, we have that
\[
a_E(X/Z,B+M) \leq a_E(X^+/Z,B^{+}+M^{+})
\]
and the inequality is strict if and only if the center of $E$ on $X$ is contained in the flipping locus union the support of $B-\pi ^{-1}_* B^+$.
\end{proposition}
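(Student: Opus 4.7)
The plan is to apply the negativity lemma to the difference of pullbacks on a common birational model. By condition (4) of the definition of a quasi-flip, I choose a normal projective variety $Y$ with birational morphisms $p\colon Y \to X$, $q\colon Y \to X^+$, and $\psi := \phi\circ p = \phi^+\circ q \colon Y \to W$, together with a nef $\qq$-Cartier divisor $M_Y$ on $Y$ whose push-forwards are $M$ and $M^+$; I may further blow up $Y$ so that the given prime divisor $E$ appears on $Y$. Writing
\[
p^*(K_X+B+M) = K_Y + B_Y + M_Y, \qquad q^*(K_{X^+}+B^++M^+) = K_Y + B^+_Y + M_Y,
\]
and setting $D := B_Y - B^+_Y$, the canonical and nef contributions cancel, and
\[
-D = q^*(K_{X^+}+B^++M^+) - p^*(K_X+B+M)
\]
is $\psi$-nef, being the sum of $q^*(K_{X^+}+B^++M^+)$ (pullback of a $\phi^+$-nef divisor) and $p^*(-(K_X+B+M))$ (pullback of a $\phi$-nef divisor).

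Since $q_*B^+_Y = B^+$ and $p_*B_Y = B$, pushing forward to $W$ yields
\[
\psi_*(-D) = \phi^+_*B^+ - \phi_*B \leq 0
\]
by condition (3). The negativity lemma for the projective birational morphism $\psi$ now gives $-D \leq 0$, that is, $D \geq 0$. Since
\[
a_E(X^+/Z,B^++M^+) - a_E(X/Z,B+M) = \operatorname{coeff}_E(B_Y) - \operatorname{coeff}_E(B^+_Y) = \operatorname{coeff}_E(D),
\]
the weak inequality follows.

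For the strict statement, I analyze when $\operatorname{coeff}_E(D) > 0$. If $c_X(E)$ lies outside $\operatorname{Ex}(\phi) \cup \supp(B - \pi^{-1}_*B^+)$, then $\pi$ is an isomorphism near the generic point of $c_X(E)$ and the two boundaries coincide there; together with condition (4) this forces $\operatorname{coeff}_E(D) = 0$, so the log discrepancies agree. For the opposite implication, if $c_X(E) \subseteq \supp(B-\pi^{-1}_*B^+)$ but $c_X(E) \not\subseteq \operatorname{Ex}(\phi)$, then $Y$ can be chosen locally isomorphic to $X$ and $X^+$ around $c_X(E)$ and $\operatorname{coeff}_E(D)$ reads off directly as a positive difference of boundary coefficients. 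The remaining case $c_X(E) \subseteq \operatorname{Ex}(\phi)$ is the main obstacle: here the ample hypothesis on the quasi-flip upgrades $-D$ from $\psi$-nef to $\psi$-ample over a neighborhood of $\phi(\operatorname{Ex}(\phi))$ in $W$, and one must invoke the refined form of the negativity lemma, ensuring that every prime divisor of $Y$ whose image in $W$ lies in this locus is contained in $\supp(D)$, to conclude $\operatorname{coeff}_E(D) > 0$. The delicate point is balancing the two ampleness conditions on a single common model so that the strict positivity propagates to every exceptional divisor of $\psi$ above the flipping locus.
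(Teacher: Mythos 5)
The paper does not actually prove this proposition; it is quoted from Shokurov's ``Monotonicity'', so your argument can only be judged on its own terms. Your proof of the inequality itself is correct and is the standard one: condition (4) lets you cancel the nef parts on a common model $Y$ where the b-divisor descends, $-D=q^*(K_{X^+}+B^++M^+)-p^*(K_X+B+M)$ is $\psi$-nef, $\psi_*D=\phi_*B-\phi^+_*B^+\geq 0$ by condition (3), and the negativity lemma gives $D\geq 0$, which is exactly the inequality of log discrepancies.

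The characterization of strictness, however, is not established, for two concrete reasons. First, in both of your ``non-flipping-locus'' cases you deduce from $c_X(E)\not\subseteq\operatorname{Ex}(\phi)$ that $\pi$ is an isomorphism near the generic point of $c_X(E)$. This ignores $\operatorname{Ex}(\phi^+)$: $\pi=(\phi^+)^{-1}\circ\phi$ is an isomorphism there only if $\phi^+$ is also an isomorphism over the corresponding point of $W$, and for a general ample quasi-flip this fails. Already for a weak divisorial extraction ($\phi=\operatorname{id}$, so the flipping locus is empty) with $B^+=\pi_*B$, the divisor $D=(\phi^+)^*(K_X+B+M)-(K_{X^+}+B^++M^+)$ is effective, $\phi^+$-exceptional, and $-D$ is $\phi^+$-ample, so $D\neq 0$ (otherwise the zero divisor would be relatively ample and $\phi^+$ finite); hence the extracted divisor has strictly increasing log discrepancy even though its center on $X$ lies in neither $\operatorname{Ex}(\phi)$ nor $\operatorname{Supp}(B-\pi^{-1}_*B^+)$. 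So this step needs either a restriction to small quasi-flips or explicit control of where $\phi^+$ fails to be an isomorphism. Second, your mechanism for the case $c_X(E)\subseteq\operatorname{Ex}(\phi)$ is wrong: $-D$ is a sum of pullbacks of relatively ample divisors under the birational morphisms $p$ and $q$, and such a sum is never $\psi$-ample --- it is trivial on every curve contracted by both $p$ and $q$. The correct route, which you only gesture at, is: for $w\in\phi(\operatorname{Ex}(\phi))$ choose a curve $C'\subseteq\phi^{-1}(w)$ contracted by $\phi$ (the fiber is positive dimensional there) and let $C$ be its strict transform on $Y$; then $-D\cdot C\geq -(K_X+B+M)\cdot C'>0$ by $\phi$-ampleness, so $\psi^{-1}(w)$ meets $\operatorname{Supp}(D)$, and the second half of the negativity lemma (\cite[Lemma 3.39]{KM98}) forces $\psi^{-1}(w)\subseteq\operatorname{Supp}(D)$; since $E\subseteq\psi^{-1}(\phi(c_X(E)))$ this yields $\operatorname{coeff}_E(D)>0$. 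As written you acknowledge the ``delicate point'' but do not carry it out, so the ``if and only if'' remains unproved.
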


\begin{definition}
{\em 
Let $(X/Z,B+M)$ be a generalized log canonical pair, 
let $N$ be an effective divisor on $X$ and $P'$ a nef $\qq$-Cartier divisor over $Z$ on $X'$,
such that $N+P$ is $\qq$-Cartier, where $P=f_*P'$.
The {\em generalized log canonical threshold} of $N+P$ with respect to the generalized pair $(X,B+M)$
is defined to be
\[
{\rm lct}((X/Z,B+M)\mid N+P) :=
{\rm sup}\{ \lambda \mid (X/Z, B+ \lambda N + M+ \lambda P) \text{ is generalized log canonical} \},
\]
where the above generalized pair has boundary part $B+\lambda N$ and
nef part $M+\lambda P$.
Observe that the above threshold is a non-negative rational number or $+\infty$, for instance if $P'=f^*P$ and $N=0$.
}
\end{definition}

\begin{remark}
{\em
Given a set of positive real numbers $\Lambda$ satisfying the DCC, we will denote by $\mathcal{B}(\Lambda)$ the set
of generalized boundaries $B+M$,
where the coefficients of $B$ belong to $\Lambda$, and where we can write $M'=\sum \lambda_i M'_i$
where $\lambda_i\in \Lambda$ and $M'_i$ are Cartier divisors nef over $Z$.
In ~\cite[Theorem 1.5]{BZ16}, Birkar and Zhang prove that the set 
\[
{\rm LCT}_n(\Lambda) = \{ {\rm lct}((X/Z,B+M)\mid N+P) \mid B+M\in \mathcal{B}(\Lambda),  N+P \in \mathcal{B}(\Lambda) \text{ and } \dim(X)=n \}
\]
satisfies the  ascending chain condition. Here, we assume that $N+P$ is $\qq$-Cartier so that the definition of log canonical threshold makes sense.
The proof relies on ~\cite{HMX14}, where this result is proved in the case $M'=N'=0$.
In ~\cite{BZ16}, the authors prove the statement by induction in the number of non-trivial coefficients of $M'$ and $N'$.
Note that if $M$ is a nef $\qq$-divisor, then there exists an integer $k$ such that $M_1=kM$ is nef and Cartier and hence $M=\frac{1}{k}M_1$. A similar statement does not hold for $\rr$-Cartier nef divisors.}
\end{remark}

\begin{remark}
{\em If $M=0$, then we will drop the word ``generalized" from the definition.
In this case, we are in the usual setting of log pairs as in~\cites{KM98,HK10}.}
\end{remark}

\subsection{Log canonical threshold with respect to weak Zariski decompositions}
In this subsection, we introduce an invariant for generalized log canonical pairs admitting a weak Zariski decomposition.

\begin{definition}
{\em
Let $(X/Z,B+M)$ be a $\qq$-factorial generalized log canonical pair with a weak Zariski decomposition 
given by the projective birational morphism $f\colon X' \rightarrow X$ over $Z$ and
the numerical equivalence $f^*(K_X+B+M)\equiv_Z N'+P'$. We consider 
$P= f_*P'$ and $N=f_*N'$ as the nef part and boundary part of a generalized boundary, 
and define
\[
{\rm lct}_{{\rm WZD} (f,N+P)}(X/Z,B+M) := {\rm lct}((X/Z,B+M)\mid N+P).
\]
We call this invariant {\em the log canonical threshold of the generalized pair with respect to the weak Zariski decomposition}
or just the {\em lct with respect to the WZD}.
When the weak Zariski decomposition is clear from the context, we will just write
${\rm lct}_{\rm WZD}$ instead of ${\rm lct}_{{\rm WZD}(f,N+P)}.$}
\end{definition}

\begin{remark}
{\em The generalized log canonical threshold with respect to the weak Zariski decomposition 
depends on the chosen WZD and not only on the given generalized pair. 
For instance, every effective divisor linearly equivalent to $K_X+B+M$ gives a different weak Zariski decomposition, 
and different choices of effective divisors give different log canonical thresholds.
The above invariant is uniquely determined by the generalized pair if we choose a decomposition as defined by Nakayama in~\cite[Chapter 3, \S 1.a.]{Nak04}. 
However, the existence of a WZD is a weaker assumption (see, e.g.,~\cite{Bir12a}).}
\end{remark}

\begin{lemma}\label{finiteness}
Let $(X/Z,B+M)$ be a $\qq$-factorial generalized log canonical pair with a weak Zariski decomposition.
The lct with respect to the WZD is finite unless $K_X+B+M$ is nef over $Z$.
\end{lemma}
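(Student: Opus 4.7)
The plan is to prove the contrapositive: assume ${\rm lct}_{{\rm WZD}(f,N+P)}(X/Z,B+M)=+\infty$ and conclude that $K_X+B+M$ is nef over $Z$. Writing the weak Zariski decomposition as $f^*(K_X+B+M)\equiv_Z P'+N'$ with $P=f_*P'$ and $N=f_*N'$, push-forward gives $K_X+B+M\equiv_Z N+P$ on $X$; by $\qq$-factoriality $N+P$ is $\qq$-Cartier, and pulling back yields $f^*(N+P)\equiv_Z N'+P'$.

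The key preliminary step is to promote this numerical equivalence on $X'$ to an actual equality of divisors. The $\qq$-Cartier divisor $D:=f^*(N+P)-N'-P'$ has $f_*D=(N+P)-N-P=0$, so $D$ is $f$-exceptional. Moreover, $D\equiv_Z 0$ and every $f$-exceptional curve is contracted over $Z$, so also $D\equiv_f 0$. The negativity lemma applied in turn to $D$ and to $-D$ (each $f$-exceptional and each with $f$-nef negative, since $D$ is $f$-numerically trivial) forces $D=0$, i.e.\ $f^*(N+P)=N'+P'$.

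Now choose a log resolution $g\colon Y\to X$ factoring through $X'$ and through the model carrying the nef representative $M'$ of the nef b-divisor of $(X,B+M)$, and write $g=f\circ h$ with $h\colon Y\to X'$. Set $K_Y+B_Y+M_Y=g^*(K_X+B+M)$ with $M_Y$ the pullback of $M'$ to $Y$, which is nef; because $(X,B+M)$ is generalized log canonical, $\text{coeff}(B_Y)\le 1$. For the generalized pair $(X/Z,B+\lambda N+M+\lambda P)$, whose nef b-divisor is represented by $M'+\lambda P'$, one has on $Y$
\[
K_Y+B_Y^\lambda+M_Y+\lambda h^*P' \;=\; g^*(K_X+B+M)+\lambda g^*(N+P),
\]
and combined with $g^*(N+P)=h^*(N'+P')$ from the previous step this yields $B_Y^\lambda=B_Y+\lambda h^*N'$.

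The generalized pair $(X/Z,B+\lambda N+M+\lambda P)$ is generalized log canonical if and only if every coefficient of $B_Y^\lambda$ is at most $1$. Since $h^*N'$ is effective, any prime component $F$ with $\text{coeff}_F(h^*N')=c>0$ produces the finite bound $\lambda\le(1-\text{coeff}_F(B_Y))/c$; hence the assumption that the lct is $+\infty$ forces $h^*N'=0$, and as $h$ is birational and $N'$ effective, $N'=0$. Then $f^*(K_X+B+M)\equiv_Z P'$ is nef over $Z$, and lifting each curve in a fibre of $X\to Z$ to its strict transform on $X'$ shows that $K_X+B+M$ is nef over $Z$. The main obstacle is the negativity-lemma argument of the second paragraph: once the divisorial equality $f^*(N+P)=N'+P'$ is in hand, the remainder is a coefficient comparison on a log resolution.
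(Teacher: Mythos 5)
Your proof is correct and follows essentially the same route as the paper's, which is much terser: the paper simply asserts that a non-trivial $N'$ forces the threshold to be finite and that otherwise $f^*(K_X+B+M)\equiv_Z P'$ is nef over $Z$; your contribution is to justify the key identity $f^*(N+P)=N'+P'$ via the negativity lemma and then carry out the coefficient comparison on a log resolution. One presentational caveat: you obtain $D\equiv_Z 0$ by asserting that push-forward preserves numerical equivalence over $Z$, which is not a formal triviality (its proof is essentially the same negativity-lemma argument you are about to run); but all you need is $D\equiv_f 0$, and that follows directly because every curve $C'$ contracted by $f$ is contracted over $Z$, whence $f^*(N+P)\cdot C'=0$ and $(N'+P')\cdot C'=f^*(K_X+B+M)\cdot C'=0$ by the defining numerical equivalence of the WZD. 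With that small reordering the negativity-lemma step is airtight and the remainder of your argument goes through as written.
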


\begin{proof}
Without loss of generality we may assume that we have a projective birational morphism $f\colon X'\rightarrow X$
such that both nef b-Cartier divisors $P'$ and $M'$ descend to $X'$.
If $N'$ is a non-trivial effective divisor, then the above log canonical threshold is finite, so we may assume it is trivial.
Hence, $f^*(K_X+B+M) \equiv_Z P'$, so $K_X+B+M$ is nef over $Z$.
\end{proof}

\begin{lemma}\label{highermodel}
The lct with respect to the WZD does not change if we replace $X'$ by a higher birational model.
\end{lemma}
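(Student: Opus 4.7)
The plan is as follows. Given the WZD realized by $f\colon X'\to X$ with $f^*(K_X+B+M)\equiv_Z P'+N'$, and given any projective birational morphism $g\colon X''\to X'$ to a higher model, I would pull the decomposition back to $X''$: set $h := f\circ g$, $P'' := g^*P'$, $N'' := g^*N'$. Then
\[
h^*(K_X+B+M)\equiv_Z P''+N''
\]
is a WZD on $X''$ in the sense of Definition~\ref{wzd}, since pullback of nef (resp.\ effective) $\qq$-Cartier divisors is nef (resp.\ effective) $\qq$-Cartier. By Lemma~\ref{finiteness}'s standard trick, we may also assume $M'$ descends to $X'$, so that $M'' := g^*M'$ descends to $X''$.

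The crux is to verify that the pushforwards to $X$ entering the definition of ${\rm lct}_{\rm WZD}$ are unaltered. For any $\qq$-Cartier divisor $D'$ on $X'$ and any birational $g$, divisorial pushforward satisfies $g_*g^*D' = D'$: the $g$-exceptional components of $g^*D'$ are killed by $g_*$, while the non-exceptional part recovers $D'$. Applying this to $P'$ and $N'$ yields
\[
h_*P'' = f_*(g_*g^*P') = f_*P' = P,\qquad h_*N'' = f_*N' = N.
\]
Thus the divisors $P$ and $N$ on $X$ coming from the WZD on $X''$ coincide with those coming from the WZD on $X'$.

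Finally, to conclude that the two generalized lcts agree, I would observe that the nef b-Cartier b-divisor underlying the generalized boundary $M+\lambda P$ is the same whether it is represented by $M'+\lambda P'$ on $X'$ or by $g^*(M'+\lambda P') = M''+\lambda P''$ on $X''$. Hence the generalized pair $(X/Z,\, B+\lambda N+M+\lambda P)$ whose log canonicity defines the threshold is literally the same object in both cases, so
\[
{\rm lct}_{{\rm WZD}(h,\,N+P)}(X/Z,B+M) = {\rm lct}((X/Z,B+M)\mid N+P) = {\rm lct}_{{\rm WZD}(f,\,N+P)}(X/Z,B+M).
\]
I do not expect a genuine obstacle here: the lemma is a formal consequence of the identity $g_*g^* = {\rm id}$ on divisors for birational $g$, together with the compatibility of the underlying nef b-divisor under pullback. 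The only subtlety is the bookkeeping that both the divisorial data $(N,P)$ on $X$ and the associated nef b-divisor are simultaneously preserved when passing to the higher model.
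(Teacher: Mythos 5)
Your proposal is correct and is essentially the paper's own argument, spelled out in more detail: the paper simply notes that the threshold depends only on $(X,B+M)$, the nef b-Cartier divisor determined by $P'$, and the effective divisor $N=f_*N'$, all of which are preserved under passing to a higher model. Your verification that $h_*g^*P'=f_*P'$ and $h_*g^*N'=f_*N'$, and that the underlying nef b-divisor is unchanged, is exactly the bookkeeping the paper leaves implicit.
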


\begin{proof}
The generalized log canonical threshold only depends on $(X,B+M)$, the nef b-Cartier divisor $P'$, 
and the effective divisor $N= f_*N'$.
This data is preserved when replacing $X'$ with a higher birational model.
\end{proof}

\begin{lemma}\label{WZDpreserved} Let $(X/Z,B+M)$ be a $\qq$-factorial generalized log canonical pair with a weak Zariski decomposition $f:X'\to X$ such that $f^*(K_X+B+M)\equiv_Z P'+N'$ where $P'$ is nef over $Z$ and $N'\geq 0$. 
If $\pi:X\dasharrow X_1$ is a quasi-flip with fixed boundary that extracts no divisors and $X_1$ is $\qq$-factorial, then $(X_1/Z,B_1+M_1)$ is a generalized log canonical pair with a compatible weak Zariski decomposition.
\end{lemma}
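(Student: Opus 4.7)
The plan is to construct a compatible WZD on $X_1$ by transporting the given decomposition on $X'$ through a common resolution, using monotonicity of log discrepancies as the key comparison tool. I would first fix a smooth projective variety $Y$ with projective birational morphisms $p\colon Y \to X'$ and $h\colon Y \to X_1$, and set $g := f\circ p \colon Y \to X$; by passing to a higher birational model if necessary (allowed by Lemma~\ref{highermodel}), I may assume that both the nef b-Cartier b-divisor associated to $(X/Z,B+M)$ and the nef divisor $P'$ from the given WZD descend to $Y$. Writing
\[
K_Y + B_Y + M_Y = g^*(K_X+B+M), \qquad K_Y + B_{1,Y} + M_Y = h^*(K_{X_1}+B_1+M_1),
\]
with $M_Y$ the common trace of the nef b-divisor, we get the equality $g^*(K_X+B+M) - h^*(K_{X_1}+B_1+M_1) = B_Y - B_{1,Y}$ on $Y$.

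That $(X_1/Z,B_1+M_1)$ is generalized log canonical is built into item (1) of the definition of a quasi-flip, so all the content lies in producing the decomposition. Proposition~\ref{monotonicity} supplies $a_F(X,B+M) \leq a_F(X_1,B_1+M_1)$ for every prime divisor $F$ on $Y$, equivalently $B_Y \geq B_{1,Y}$, so
\[
E := B_Y - B_{1,Y} = g^*(K_X+B+M) - h^*(K_{X_1}+B_1+M_1) \geq 0.
\]
Pulling back the given WZD to $Y$ yields $g^*(K_X+B+M) \equiv_Z p^*P' + p^*N'$, and substituting gives
\[
h^*(K_{X_1}+B_1+M_1) \equiv_Z p^*P' + (p^*N' - E).
\]
Setting $P_1 := p^*P'$, which is nef over $Z$ by functoriality of pullback, and $N_1 := p^*N' - E$, the morphism $h\colon Y \to X_1$ equipped with the numerical decomposition $h^*(K_{X_1}+B_1+M_1) \equiv_Z P_1 + N_1$ will furnish a WZD on $(X_1/Z,B_1+M_1)$ compatible with the one on $X$ provided $N_1$ is effective.

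The main obstacle, and the only non-formal step, is the effectiveness of $N_1 = p^*N' - E$. By the strict-inequality clause in Proposition~\ref{monotonicity}, the support of $E$ is concentrated on prime divisors $F$ of $Y$ whose center on $X$ lies in the flipping locus of $\phi\colon X \to W$ or in $\operatorname{Supp}(B - \pi_*^{-1}B_1)$; under the fixed-boundary and no-extraction hypotheses on $\pi$, the latter support is contained in the $\pi$-contracted divisors of $X$. In particular the strict transforms on $Y$ of divisors common to $X$ and $X_1$ carry coefficient zero in $E$, so $E$ is $h$-exceptional and the verification of $p^*N' \geq E$ reduces to a coefficient-wise check along $h$-exceptional divisors. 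Here I would exploit that $E$ is anti-nef for the composition $Y \to W$ (since $g^*(K_X+B+M)$ is anti-nef and $h^*(K_{X_1}+B_1+M_1)$ is nef over $W$) and the identity $p^*N' \equiv_Z g^*(K_X+B+M) - p^*P'$: applying the negativity lemma over $W$ to bound the coefficients of $E$ by the depth of negativity of $(K_X+B+M)$ along the flipping and contracted loci—precisely what $N'$ records—should force the coefficient bound $p^*N' \geq E$. Once this is verified, $(Y, p^*P', p^*N' - E)$ provides the compatible weak Zariski decomposition on $X_1$.
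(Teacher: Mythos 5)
Your setup agrees with the paper's: pass to a model dominating both $X$ and $X_1$, write $g^*(K_X+B+M)=h^*(K_{X_1}+B_1+M_1)+E$ with $E\geq 0$ and $h$-exceptional (the latter following from the fixed-boundary and no-extraction hypotheses), and try to show $N_1:=p^*N'-E\geq 0$. But that last step --- which you correctly identify as the only non-formal one --- is not actually carried out. The sketch you give (``$E$ is anti-nef over $W$\,\dots\ applying the negativity lemma over $W$ to bound the coefficients of $E$ by the depth of negativity of $(K_X+B+M)$\dots\ precisely what $N'$ records --- should force $p^*N'\geq E$'') is both incomplete and misdirected: $N'$ is the effective part of an arbitrary weak Zariski decomposition and there is no reason for its coefficients to measure any ``depth of negativity'' over $W$; moreover $p^*N'$ itself is not anti-nef over $W$, so there is no way to compare $p^*N'$ and $E$ separately by negativity over $W$.

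The correct argument applies the negativity lemma once, \emph{over $X_1$}, to the single divisor $p^*N'-E$. Indeed, from
\[
h^*(K_{X_1}+B_1+M_1)\equiv_Z p^*P'+(p^*N'-E)
\]
one gets $p^*N'-E\equiv_{X_1}-p^*P'$, which is anti-nef over $X_1$; and since $E$ is $h$-exceptional,
\[
h_*(p^*N'-E)=h_*(p^*N')\geq 0.
\]
The negativity lemma then gives $p^*N'-E\geq 0$ directly. You had both ingredients in hand ($E$ is $h$-exceptional; the decomposition identifies $p^*N'-E$ with $-p^*P'$ up to numerical equivalence over $X_1$) but did not combine them, so as written the proof has a genuine gap at its decisive step.
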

\begin{proof} We may assume that $f_1:X'\to X_1$ is a morphism. We have 
$$P'+N'\equiv _Z f^*(K_X+B+M)\equiv _Zf_1^*(K_{X_1}+B_1+M_1)+E$$ 
where $E\geq 0$ is $f_1$-exceptional.
Note $N'-E \equiv_{X_1} -P'$ is anti-nef over $X_1$
and ${f_1}_*(N'-E) = {f_1}_* N'  \geq 0$. 
It follows by the negativity lemma that $N'_1:=N'-E\geq 0$. 
But then $f_1^*(K_{X_1}+B_1+M_1)\equiv _Z P'+N'_1$ is a compatible weak Zariski decomposition.
\end{proof}

\begin{lemma}\label{nondecreasing}
Let $(X/Z,B+M)$ be a $\qq$-factorial generalized log canonical pair with a weak Zariski decomposition
and 
\[
 \xymatrix@C=2em{
(X/Z,B+M)\ar@{-->}[r]^-{\pi_1} & (X_1/Z,B_1+M_1)\ar@{-->}[r]^-{\pi_2} & (X_2/Z,B_2+M_2) \ar@{-->}[r]^-{\pi_3} & 
\cdots \ar@{-->}[r]^-{\pi_i} & (X_i/Z,B_i+M_i)\ar@{-->}[r]^-{\pi_{i+1}} & \cdots
 }
\]
a sequence of small ample $\qq$-factorial quasi-flips with fixed boundary for $K_X+B+M$ over $Z$.
Then, the lct of the generalized pairs $(X_i/Z,B_i+M_i)$ with respect to the WZD induced by Lemma \ref{WZDpreserved}
forms a non-decreasing sequence of positive rational numbers. 
\end{lemma}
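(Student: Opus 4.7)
Fix $i$ and set $\lambda:=\lambda_i$; by the definition of the lct with respect to the WZD this supremum is attained (the defining condition is a closed affine inequality in $\lambda$), so it suffices to prove that $(X_{i+1}/Z,\,B_{i+1}+\lambda N_{i+1}+M_{i+1}+\lambda P_{i+1})$ is generalized log canonical, which will force $\lambda_{i+1}\geq\lambda$. Pick a sufficiently high common log resolution $X'$ of $X_i$ and $X_{i+1}$ to which both $M'$ and $P'$ descend, with morphisms $f_j\colon X'\to X_j$. Writing $(B'_j)_\lambda$ for the boundary of the enlarged pair on $X'$, determined by $f_j^*(K_{X_j}+B_j+\lambda N_j+M_j+\lambda P_j)=K_{X'}+(B'_j)_\lambda+M'+\lambda P'$, the task reduces to showing $D:=(B'_i)_\lambda-(B'_{i+1})_\lambda\geq 0$ on $X'$. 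Indeed, the glc condition at stage $i$ is $(B'_i)_\lambda\leq 1$ coefficientwise, and $D\geq 0$ then transfers this to $(B'_{i+1})_\lambda\leq 1$, which is the glc condition at stage $i+1$.

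The plan is to apply the negativity lemma to $D$ with respect to $f_{i+1}$. That $(f_{i+1})_*D=0$ follows from the small, fixed-boundary hypothesis together with Lemma \ref{WZDpreserved}: by fixed boundary and the common nef b-divisor one has $\pi_{i+1,*}B_i=B_{i+1}$ and $\pi_{i+1,*}M_i=M_{i+1}$, while Lemma \ref{WZDpreserved} combined with the smallness of $\pi_{i+1}$ yields $\pi_{i+1,*}N_i=N_{i+1}$ and $\pi_{i+1,*}P_i=P_{i+1}$, since the new effective divisor $N'_{i+1}$ differs from $N'_i$ only by an $f_{i+1}$-exceptional divisor. The heart of the argument is showing $-D$ is $f_{i+1}$-nef. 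From the WZD numerical equivalence $N_i+P_i\equiv_Z K_{X_i}+B_i+M_i$ one obtains $K_{X_i}+B_i+\lambda N_i+M_i+\lambda P_i\equiv_Z(1+\lambda)(K_{X_i}+B_i+M_i)$; since the quasi-flip is ample, $-(K_{X_i}+B_i+M_i)$ is ample over the base $W_i$ of the flipping contraction, hence so is $-(K_{X_i}+B_i+\lambda N_i+M_i+\lambda P_i)$. For a curve $C\subset X'$ contracted by $f_{i+1}$, the equality $\phi_i\circ f_i=\phi_{i+1}\circ f_{i+1}$ implies $(f_i)_*C$ is either zero or a curve contracted by $\phi_i$ to a point of $W_i$; in either case $-f_i^*(K_{X_i}+B_i+\lambda N_i+M_i+\lambda P_i)\cdot C\geq 0$, while $f_{i+1}^*(\cdots)\cdot C=0$, whence $-D\cdot C\geq 0$. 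The negativity lemma now delivers $D\geq 0$.

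The main obstacle is the gap between numerical and linear equivalence provided by the WZD: we never have $N+P=K+B+M$ as divisors, only $\equiv_Z$, so the identification of the enlarged flipping divisor with $(1+\lambda)(K+B+M)$ is merely numerical. Fortunately, the negativity lemma requires only that $-D$ be $f_{i+1}$-nef, a purely numerical condition, so this is exactly enough. Rationality of each $\lambda_i$ follows because the defining conditions are finite systems of affine inequalities with rational coefficients on a log resolution; positivity can be verified at the initial stage and then propagates along the sequence by the non-decreasing property just established.
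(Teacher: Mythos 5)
Your proof is correct and follows essentially the same route as the paper's: reduce to a single quasi-flip, compare the crepant pullbacks of the $\lambda$-enlarged log divisors on a common resolution to which $M'$ and $P'$ descend, and deduce effectivity of their difference from the negativity lemma together with the induced decomposition of Lemma~\ref{WZDpreserved}. The only cosmetic difference is that you inline the negativity-lemma computation where the paper instead cites Proposition~\ref{monotonicity}, applied to the enlarged pairs, which are numerically $(1+\lambda)$ times the original ones over $W_i$.
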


\begin{proof}
Since $\pi_i$ is a small ample quasi-flip over $Z$
we know that the generalized log canonical pair $(X_i/Z,B_i+M_i)$ is not nef over $Z$.
Hence, by Lemma~\ref{finiteness}, we conclude that the lct with respect to any WZD of $K_{X_i}+B_i+M_i$ over $Z$ is finite.
It suffices to prove the statement for a single small ample quasi-flip $\pi \colon X \dashrightarrow X^+$ over $Z$,  
of the $\qq$-factorial generalized log canonical pair $(X/Z,B+M)$.
We will denote by $(X^{+}/Z,B^{+}+M^{+})$ the flipped generalized log canonical pair.
Consider two projective birational morphisms $f\colon X'\rightarrow X$ and $f^+ \colon X'\rightarrow X^+$ over $Z$,
such that both nef b-Cartier divisors $P'$ and $M'$ descend on $X'$.
We will denote by $f^*(K_X+B+M)\equiv_Z P'+N'$ the induced weak Zariski decomposition for
$K_X+B+M$ on $X'$.
By the negativity lemma we have 
\[
{f^+}^*(K_{X^{+}}+B^{+}+M^{+}) \equiv_Z P' + {N'}^{+}
\]
where $N' \geq {N'}^{+} \geq 0$.
Hence, we have an induced Zariski decomposition for $K_{X^{+}}+B^{+}+M^{+}/Z$
and we will denote 
\[
P^{+}=f^+_*P'\qquad  \text{ and }\qquad  N^{+}=f^{+}_*{N'}^{+}.
\]
Without loss of generality we may assume that $X'$ is a log resolution of both generalized pairs.
By Lemma~\ref{highermodel}, this assumption does not change the lct with respect to the WZD. 
Therefore, by Proposition~\ref{monotonicity} we conclude that for every $\lambda >0$ we have that
\[
{f^+}^*(K_{X^+}+B^{+}+M^{+} +\lambda(P^{+}+N^{+})) \leq
f^*(K_X+B+M+\lambda(P+N)),
\]
concluding the inequality between log canonical thresholds.
\end{proof}

\begin{corollary}
The lct with respect to the WZD of a small ample quasi-flip $(X/Z,B+M)\dashrightarrow (X^{+}/Z,B^{+}+M^{+})$
strictly increases if and only if the flipping locus contains all the generalized log canonical centers of 
$(X/Z,B+M+\lambda(P+N))$ where $\lambda$ is the log canonical threshold
of the generalized pair $(X/Z,B+M)$ with respect to the WZD.
\end{corollary}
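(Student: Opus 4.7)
The plan is to apply the monotonicity of log discrepancies (Proposition~\ref{monotonicity}) to the given small ample quasi-flip $\pi$, but viewed as a quasi-flip between the \emph{twisted} generalized pairs $(X/Z,(B+\lambda N)+(M+\lambda P))$ and $(X^+/Z,(B^++\lambda N^+)+(M^++\lambda P^+))$, and then to read off exactly when this inequality becomes strict at the generalized log canonical places realizing the threshold $\lambda$.

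I would first verify that $\pi$ really is a quasi-flip after the twist. Both twisted pairs are generalized log canonical: the one on $X$ by the definition of $\lambda$, and the one on $X^+$ since the lct $\lambda^+$ of the flipped pair with respect to the induced WZD satisfies $\lambda^+\geq \lambda$ by Lemma~\ref{nondecreasing}. The identity $K_X+B+\lambda N+M+\lambda P\equiv_Z(1+\lambda)(K_X+B+M)$, together with its analogue on $X^+$, transports the anti-nef/nef conditions over $W$ from the original quasi-flip to the twisted one; meanwhile $M+\lambda P$ and $M^++\lambda P^+$ are traces of the common b-divisor $M'+\lambda P'$. For the fixed-boundary condition, smallness of $\pi$ identifies $f$-exceptional and $f^+$-exceptional divisors on $X'$, and the relation $N'^+=N'-E$ with $E\geq 0$ from the proof of Lemma~\ref{WZDpreserved} gives $\pi_*N=f^+_*N'=f^+_*N'^+=N^+$. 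Hence $\pi_*(B+\lambda N)=B^++\lambda N^+$, and the exceptional difference $(B+\lambda N)-\pi^{-1}_*(B^++\lambda N^+)$ vanishes.

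With this setup Proposition~\ref{monotonicity} yields, for every prime divisor $E$ over $X$,
\[
a_E(X/Z,B+\lambda N+M+\lambda P)\leq a_E(X^+/Z,B^++\lambda N^++M^++\lambda P^+),
\]
with strict inequality if and only if the center of $E$ on $X$ is contained in the flipping locus. On the other hand, since log discrepancies are affine in the scaling parameter $\mu$, the strict inequality $\lambda^+>\lambda$ is equivalent to the right-hand side being strictly positive for every $E$, which in turn, combined with the monotonicity inequality, is equivalent to every generalized log canonical place $E$ of $(X/Z,B+\lambda N+M+\lambda P)$ having center on $X$ contained in the flipping locus. Passing from places to their images yields the desired characterization: the flipping locus contains every generalized log canonical center of $(X/Z,B+M+\lambda(P+N))$.

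The main obstacle is the bookkeeping in the first step—verifying that the twisted $\pi$ is still a quasi-flip in the strict sense of the definition, and in particular that the identity $\pi_*N=N^+$ really kills the exceptional-support correction in Proposition~\ref{monotonicity}. Once that is in place the rest is a direct conversion between monotonicity of log discrepancies and the strict increase of the lct.
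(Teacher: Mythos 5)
Your strategy---rerunning Proposition~\ref{monotonicity} for the quasi-flip viewed between the $\lambda$-twisted generalized pairs---is exactly the argument underlying the paper, which states this corollary without proof as a byproduct of the proof of Lemma~\ref{nondecreasing} (whose final display is precisely the monotonicity inequality for the twisted pairs). Your verification that the twist is again a small ample quasi-flip with fixed boundary, via $K_X+B+M+\lambda(P+N)\equiv_Z(1+\lambda)(K_X+B+M)$ and $\pi_*N=N^+$, is correct and is the bookkeeping the paper leaves implicit.

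However, the final equivalence you invoke is not correct as stated, and this leaves a genuine gap in the ``only if'' direction. You claim that $\lambda^+>\lambda$ is equivalent to $a_E(X^+/Z,B^++\lambda N^++M^++\lambda P^+)>0$ for \emph{every} prime divisor $E$ over $X^+$. The forward implication fails: a divisor $E$ can be a generalized log canonical place of the $\lambda$-twisted pair whose log discrepancy is \emph{constant} in the scaling parameter $\mu$---for instance a generalized lc place of $(X/Z,B+M)$ itself whose center avoids ${\rm Supp}(N)$ and over which $P'$ is numerically a pullback. Such a place has $a_E=0$ for all $\mu$ and therefore never obstructs the threshold from increasing, so $\lambda^+>\lambda$ can hold while the corresponding lc center lies outside the flipping locus. (This is arguably a defect of the corollary as stated, but your proof does not repair it; to get a true ``only if'' one must restrict attention to the lc places that actually compute the threshold, i.e., those for which $a_E(\mu)$ is strictly decreasing.) In the ``if'' direction your argument is sound, but you should also justify why strict positivity at the lc places forces a strictly larger threshold: the generalized lct is computed as a finite minimum over the divisors of a fixed log resolution on which $B'$ and $N'$ have simple normal crossing support and $M'$, $P'$ descend, and each $a_E$ is affine and nonincreasing in $\mu$, so positivity at the finitely many places with $a_E(\lambda)=0$ does yield $\lambda^+>\lambda$. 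This finiteness is the point your phrase ``log discrepancies are affine in the scaling parameter'' gestures at but does not establish.
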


\subsection{Generalized divisorially log terminal modifications}
In this subsection, we recall the proof of the existence of $\qq$-factorial 
dlt modifications for generalized log canonical pairs 
(see, e.g.,~\cite[Lemma 4.5]{BZ16} and~\cite[2.13.(2)]{Bir17}).
In ~\cite{AH12} and ~\cite[Theorem 3.1]{KK10}, there is a proof of existence of dlt modifications for pairs.

\begin{definition}\label{d-dlt}
{\em 
We say that the pair $(X/Z,B+M)$ is {\em divisorially log terminal} or {\em dlt} if there exists an open subset $U\subset X$ such that
\begin{enumerate}
\item the coefficients of $B$ are less than or equal to one, 
\item $U$ is smooth and $B|_U$ has simple normal crossings, 
\item all the generalized non-klt centers of $(X,B+M)$ intersect $U$ and are given by strata of $\lfloor B\rfloor$.
\end{enumerate}
}
\end{definition}
Note that if $(X/Z,B+M)$  is dlt and $\phi: X\dasharrow X'$ is a step of the $(X/Z,B+M)$-MMP then $(X',B'+M'=\phi _*(B+M))$ is also dlt.
This can be checked easily by letting $U'=\phi(U\setminus {\rm Ex}(\phi))$ and observing that since $(X,B+M)$ is generalized log canonical, then the flipped locus contains no generalized log canonical centers of $(X',B'+M')$.

\begin{lemma}\label{dlt-perturbation}
Let $(X/Z,B+M)$ be a generalized dlt pair and $U\subset X$ an open subset as in the definition above.
Let $A$ be a divisor on $X$ ample over $Z$ and $\epsilon >0$ a rational number. 
Then, there exists a $\qq$-divisor $B_\epsilon \sim_\qq B+\epsilon A$ so that $(X/Z,B_\epsilon+M)$ is generalized klt.
\end{lemma}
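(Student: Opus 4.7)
The plan is to take $B_\epsilon = B - \delta S + D$, where $S := \lfloor B \rfloor$, $\delta > 0$ is a small rational number, and $D$ is an effective $\qq$-divisor with $D \sim_\qq \delta S + \epsilon A$ chosen in sufficiently general position. This automatically ensures $B_\epsilon \sim_\qq B + \epsilon A$, and $B_\epsilon \ge 0$ as soon as $\delta < 1$, since the coefficient of $B$ along each component of $S$ is exactly $1$. First I would fix $\delta > 0$ rational with $\delta < 1$ small enough that the class $\delta S + \epsilon A$ is ample over $Z$, which is possible because the ample cone is open and $\epsilon A$ is ample over $Z$.

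To produce $D$ I would fix a log resolution $f \colon X' \to X$ of $(X, B)$ on which the nef part $M'$ descends, and consider the boundary $B'$ with $K_{X'} + B' + M' = f^*(K_X + B + M)$. Its coefficients are $\le 1$, with equality precisely on the generalized log canonical places of $(X, B+M)$; by the dlt hypothesis the center on $X$ of any such place is contained in $S$, so $v_E(S) > 0$ for every such place $E$. Let $a > 0$ be the minimum of $1 - \mathrm{coeff}_E(B')$ over the finitely many non-lc-place divisors $E$ appearing on $X'$. I would then choose $m$ large and divisible so that $|m(\delta S + \epsilon A)|$ is very ample over $Z$, $1/m < a$, and (via Serre vanishing, using that $(\delta S + \epsilon A)|_{S_i}$ is ample over $Z$ for $\delta$ small) so that for each component $S_i$ of $S$ the restriction map $H^0(X, m(\delta S + \epsilon A)) \to H^0(S_i, m(\delta S + \epsilon A)|_{S_i})$ is surjective onto a non-zero target. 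A general $D_0 \in |m(\delta S + \epsilon A)|$ is then a smooth prime divisor containing none of the components $S_i$, none of the codimension $\ge 2$ strata of $S$, and none of the centers on $X$ of the finitely many $f$-exceptional divisors, and its strict transform $f^{-1}_* D_0$ meets the snc structure on $X'$ transversely. Set $D := \tfrac{1}{m} D_0$ and $B_\epsilon := B - \delta S + D$.

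To verify that $(X/Z, B_\epsilon + M)$ is generalized klt, I would observe that by the generality of $D_0$, $f$ is already a log resolution of $(X, B+D)$, and $f^*D = \tfrac{1}{m} f^{-1}_* D_0$ has no $f$-exceptional components. Writing $B'' := B' - \delta f^*S + f^*D$, so that $K_{X'} + B'' + M' = f^*(K_X + B_\epsilon + M)$, I would check $\mathrm{coeff}_E(B'') < 1$ for every prime divisor $E$ on $X'$. For $E \ne f^{-1}_* D_0$ one has $v_E(D) = 0$, and either $\mathrm{coeff}_E(B') = 1$ with $E$ a generalized log canonical place, whence $v_E(S) > 0$ and the coefficient drops to $1 - \delta v_E(S) < 1$, or $\mathrm{coeff}_E(B') \le 1 - a$, in which case the bound is automatic. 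For $E = f^{-1}_* D_0$, $\mathrm{coeff}_E(B') = 0$, $v_E(S) = 0$, and $v_E(D) = 1/m < 1$. The main obstacle is the Bertini step: one needs $\delta S + \epsilon A$ to be ample over $Z$ (not merely big) so that $D$ can be chosen to avoid every generalized non-klt center of $(X, B+M)$ simultaneously, and this is precisely what forces the smallness of $\delta$.
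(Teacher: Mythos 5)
Your proposal is correct and follows essentially the same route as the paper's proof: decrease the coefficients of $B$ along $\lfloor B\rfloor$ by a small rational $\delta$ and compensate with a general member of the (ample, hence base-point-free for divisible $m$) system $|m(\delta\lfloor B\rfloor+\epsilon A)|$, then use that the generalized non-klt centers are exactly the strata of $\lfloor B\rfloor$, which the general member avoids. Your version is more detailed on the Bertini and discrepancy bookkeeping (and tacitly, like the paper, treats $\lfloor B\rfloor$ as $\qq$-Cartier so that $f^*S$ makes sense term by term), but the underlying argument is the same.
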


\begin{proof}
Let $\delta>0$ be a sufficiently small rational number.
For any rational  number $0<\delta \ll 1$, we may pick an integer $m>0$ such that $m(A-\delta \lfloor B\rfloor)$ is integral and generated. In particular $| m(A-\delta \lfloor B\rfloor)|$ is base point free on $U$. Let $D\in | m(A-\delta \lfloor B\rfloor)|$ be a general element and set $B_\epsilon =B-\epsilon \delta \lfloor B\rfloor +\frac \epsilon m D$ so that $K_X+B_\epsilon+M\sim _\Q K_X+B+\epsilon A +M$ is $\Q$-Cartier. Since 
the only log canonical centers of $(X/Z,B+M)$ are strata of $\lfloor B\rfloor$ and the support of $D$ contains no such strata, it follows easily that $(X/Z,B_\epsilon+M)$ has no non-klt centers and hence is generalized klt.
\end{proof}

\begin{lemma}\label{l-qf} Let $(X,B+M)$  be a generalized dlt pair, then there exists a small birational morphism $\nu:X'\to X$ such that 
$X'$ is $\Q$-factorial and $(X',B'+M'=\nu ^{-1}_*(B+M))$ is dlt.
\end{lemma}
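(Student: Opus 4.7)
The plan is to construct $\nu\colon X'\to X$ as the output of a relative minimal model program on a log resolution of $(X,B+M)$. Take $f\colon Y\to X$ to be a log resolution of $(X,B+M)$ on which the nef b-divisor descends to a nef $\qq$-Cartier divisor $M_Y$, so that $K_Y+B_Y+M_Y=f^*(K_X+B+M)$. Because $(X,B+M)$ is generalized dlt, for every $f$-exceptional prime divisor $E_i$ on $Y$ the log discrepancy $a_i:=a_{E_i}(X,B+M)$ is strictly positive; in particular $\operatorname{coeff}_{E_i}(B_Y)=1-a_i<1$.

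Fix a rational $\epsilon$ with $0<\epsilon<\min_i a_i$ and set
$$B_Y^+:=f^{-1}_*B+\sum_i\bigl(\max(1-a_i,0)+\epsilon\bigr)E_i.$$
Every coefficient of $B_Y^+$ lies in $[0,1)$, and its support together with that of $M_Y$ is snc on the smooth $Y$, so $(Y,B_Y^+ +M_Y)$ is a $\qq$-factorial generalized klt pair. Comparing with the crepant pullback identity gives
$$K_Y+B_Y^+ +M_Y\equiv_X F,$$
where $F=\sum_i c_iE_i$ with every $c_i>0$; thus $F$ is an $f$-exceptional effective divisor whose support is the whole $f$-exceptional locus.

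Now run a $(K_Y+B_Y^+ +M_Y)$-MMP over $X$. Since $K_Y+B_Y^+ +M_Y$ is $\qq$-linearly equivalent over $X$ to the effective exceptional divisor $F$, the generalized BCHM of Birkar--Zhang~\cite{BZ16} furnishes termination and yields a minimal model $\nu\colon X'\to X$ on which the pushforward of $K_Y+B_Y^+ +M_Y$ is nef over $X$. But its own pushforward to $X$ vanishes, so the negativity lemma forces every component of $F$ to be contracted during the MMP. Hence $\nu$ is small, $X'$ inherits $\qq$-factoriality from the steps of the MMP, and the strict transform is $\nu^{-1}_*B_Y^+=\nu^{-1}_*B=:B'$. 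Crepancy gives $K_{X'}+B'+M'=\nu^*(K_X+B+M)$, so discrepancies over $X'$ and over $X$ coincide.

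It remains to verify that $(X',B'+M')$ is dlt. Let $U\subset X$ be the open subset from Definition~\ref{d-dlt}. Since $\nu$ is small and $U$ is smooth, the restriction $\nu\colon\nu^{-1}(U)\to U$ is a small birational morphism to a smooth variety, hence an isomorphism; on $U':=\nu^{-1}(U)$ the divisor $B'$ has snc support. Preservation of discrepancies identifies generalized non-klt centers of $(X',B'+M')$ with those of $(X,B+M)$, so every such center is a stratum of $\lfloor B'\rfloor$ meeting $U'$. The main obstacle is the termination of the MMP together with the fact that it contracts exactly the $f$-exceptional locus; this is where the generalized BCHM of~\cite{BZ16} is essential, and everything else reduces to routine applications of the negativity lemma and the behaviour of small morphisms over smooth loci.
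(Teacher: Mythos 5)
Your overall strategy (run a relative MMP on a log resolution to contract exactly the exceptional locus, then check dlt-ness over the open set $U$ of Definition~\ref{d-dlt}) is the same as the paper's, but the argument has a genuine gap at the very first step. You assert that because $(X,B+M)$ is generalized dlt, every $f$-exceptional prime divisor $E_i$ on the log resolution has $a_{E_i}(X,B+M)>0$. This is false: dlt pairs (generalized or not) admit exceptional log canonical places. For instance, if $X$ is smooth and $B$ is the snc union of two components of $\lfloor B\rfloor$, the blow-up of a stratum of $\lfloor B\rfloor$ produces an exceptional divisor with log discrepancy $0$; such divisors will in general appear on any log resolution. For those $E_i$ your coefficient $\max(1-a_i,0)+\epsilon$ equals $1+\epsilon>1$, so $(Y,B_Y^+ +M_Y)$ is not klt, and there is no choice of coefficients $c_i$ with both $c_i<1$ (klt) and $c_i>1-a_i$ ($F$ effective with full exceptional support) when $a_i=0$. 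Since the lemma demands that $\nu$ be small, you cannot simply decline to contract these divisors either, so the construction as written does not go through.

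The missing ingredient is a perturbation of the boundary before taking the resolution: this is exactly Lemma~\ref{dlt-perturbation} of the paper, which replaces $B$ by $B_\epsilon\sim_\qq B+\epsilon A$ (for $A$ ample, using a general member of a suitable free linear system avoiding the strata of $\lfloor B\rfloor$) so that $(X,B_\epsilon+M)$ is generalized \emph{klt}. After this reduction all exceptional log discrepancies are strictly positive, one writes $K_{X'}+B'_\epsilon+M'=\nu^*(K_X+B_\epsilon+M)+F$ with $F\geq 0$ supported on the whole exceptional locus, and the $(X',B'_\epsilon+M')$-MMP over $X$ (termination via \cite[Section 4]{BZ16}) makes $F$ nef, hence zero by the negativity lemma, giving the small $\qq$-factorialization. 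Your concluding verification of dlt-ness of $(X',B'+M')$ via the isomorphism over $U$ and preservation of log discrepancies is essentially the paper's argument and is fine once $X'$ has been correctly constructed; but note it is applied to the original boundary $B$, not to $B_\epsilon$, so the perturbation is used only to produce $\nu$, after which one returns to $(X',\nu^{-1}_*(B+M))$.
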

\begin{proof} By Lemma~\ref{dlt-perturbation}, we may assume that $(X ,B_\epsilon +M)$ is a generalized klt pair. Let $\nu :X'\to X$ be a log resolution and write $K_{X'}+B'_\epsilon +M'=\nu^*(K_X+B_\epsilon +M)+F$ where $(X',B'_\epsilon +M')$ is klt $F\geq 0$ and the support of $F$ equals the sum of all $X'\to X$ exceptional divisors. By  \cite[Section 4]{BZ16}, we may run the $({X'},B'_\epsilon +M')$ MMP over $X$. Replacing $X'$ by the output of this MMP, 
we may assume that $F$ is nef and hence $F=0$ by the negativity lemma. But then $X'\to X$ is a small birational morphism. It remains to show that $(X',B'+M'=\nu ^{-1}_*(B+M))$ is dlt. Let $U$ be the open subset given by Definition \ref{d-dlt}. Since $U$ is smooth, we may assume that $U'=\nu ^{-1}(U)\to U$ is an isomorphism (since any small birational morphism of $\Q$-factorial varieties is an isomorphism). The claim now follows since if $E$ is a divisor over $X'$ with center contained in $X'\setminus U'$, then its center on $X$ is contained in $X\setminus U$ and hence $a_E(X,B+M)>0$. 
\end{proof}

\begin{definition}
{\em
Let $(X/Z,B+M)$ be a generalized pair.
Let $h\colon Y \rightarrow X$ be a projective birational morphism of normal varieties over $Z$. 
We may assume that the given projective birational morphism $f\colon X'\rightarrow X$ factors through $h$.
Then, we define $B_Y$ and $M_Y$ to be the push-forwards of $B'$ and $M'$ on $Y$, respectively.
Thus, we can write
\[
K_Y+B_Y+M_Y=h^*(K_X+B+M).
\]
If the following conditions are satisfied:
\begin{itemize}
\item $B_Y$ is an effective divisor, 
\item $(Y/Z,B_Y^{\leq 1}+M_Y)$ is $\qq$-factorial dlt, where $B_Y^{\leq 1}=B_Y\wedge {\rm Supp}(B_Y)$, and
\item every $h$-exceptional prime divisor $E$ has log discrepancy less than or equal to zero with respect
to the generalized pair $(X/Z,B+M)$,
\end{itemize}
then we say that $(Y/Z,B_Y+M_Y)$ is a {\em $\qq$-factorial dlt modification} of $(X/Z,B+M)$.
Here, we consider $(Y/Z,B_Y+M_Y)$ as a generalized pair with nef b-Cartier divisor $M'$.
Observe that $(Y/Z,B_Y+M_Y)$ is the usual $\qq$-factorial generalized dlt modification over 
the generalized log canonical locus of $(X/Z,B+M)$.
}
\end{definition}

We will prove the following proposition in Section 2.

\begin{proposition}\label{dltmodification}
Assume termination of flips for generalized klt pairs of dimension at most $n-1$.
Let $(X/Z,B+M)$ be a generalized pair of dimension $n$, 
where $B$ is a $\qq$-divisor and $M$ is a $\qq$-Cartier b-divisor nef over $Z$.
Then, $(X/Z,B+M)$ 
has a $\qq$-factorial dlt modification $(Y/Z,B_Y+M_Y)$.
\end{proposition}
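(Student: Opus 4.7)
The plan is to follow the standard MMP-based construction of a $\qq$-factorial dlt modification, using the assumed termination in one dimension less to close the termination step of the relative MMP. First, I would choose a sufficiently high log resolution $h\colon W\to X$ so that the nef b-Cartier divisor associated to $M$ descends to a nef $\qq$-Cartier divisor $M_W$ on $W$ with $h_*M_W=M$, and so that $\operatorname{Supp}(h^{-1}_*B)\cup\operatorname{Exc}(h)$ has simple normal crossings. Writing $K_W+B_W+M_W=h^*(K_X+B+M)$ and setting $b_i:=\operatorname{coeff}_{E_i}(B_W)$ on each exceptional $E_i$, I would define a new boundary $\Gamma_W$ by taking the coefficients of $B_W$ on non-exceptional components (capped at $1$ if one wishes to stay in the lc case) and the coefficient $\max(b_i,1)$ on each exceptional $E_i$. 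Then $(W,\Gamma_W^{\leq 1}+M_W)$ is log smooth, hence $\qq$-factorial generalized dlt, and $G:=\Gamma_W-B_W$ is an effective $h$-exceptional $\qq$-divisor whose support is exactly the set of $h$-exceptional prime divisors with positive log discrepancy with respect to $(X/Z,B+M)$.

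Next, I would run a $(K_W+\Gamma_W+M_W)$-MMP over $X$ with scaling of an ample divisor. The existence of the required flips and divisorial contractions for $\qq$-factorial generalized dlt pairs is provided by~\cite{BZ16}, and each step preserves both $\qq$-factoriality and the generalized dlt property. Since $K_W+\Gamma_W+M_W\equiv_X G\geq 0$ is $h$-exceptional, this is an MMP on an effective exceptional divisor over $X$. The crux is termination. The approach is a special termination argument: after finitely many steps no flip meets $\lfloor\Gamma\rfloor$, and induction on the codimension of strata of $\lfloor\Gamma\rfloor$ reduces the problem to termination of the MMP restricted to each stratum. By divisorial adjunction in the generalized setting, the induced MMP on a stratum of codimension $c\geq 1$ is a generalized dlt MMP of dimension $n-c\leq n-1$; after a small perturbation via Lemma~\ref{dlt-perturbation} to pass to a generalized klt pair, this restricted MMP terminates by the hypothesis.

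At termination we obtain a projective birational morphism $\psi\colon Y\to X$ with $K_Y+\Gamma_Y+M_Y$ nef over $X$. Denoting by $\pi\colon W\dashrightarrow Y$ the induced birational contraction, the difference $K_Y+\Gamma_Y+M_Y-\psi^*(K_X+B+M)$ equals $\pi_*G$, which is effective, $\psi$-exceptional, and $\psi$-nef; the negativity lemma therefore forces $\pi_*G=0$. Consequently every remaining $\psi$-exceptional prime divisor has coefficient at least $1$ in $B_Y=\Gamma_Y$, equivalently log discrepancy at most zero with respect to $(X/Z,B+M)$, and $(Y/Z,B_Y+M_Y)$ is the desired $\qq$-factorial dlt modification. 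The main obstacle is the termination step: carrying out special termination cleanly in the generalized framework and verifying that divisorial adjunction on each stratum yields a generalized pair of the right flavor to which the hypothesis applies.
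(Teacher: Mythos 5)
Your overall strategy is the one the paper uses: pass to a log resolution, enlarge the boundary on the exceptional divisors so that the relative MMP over $X$ is forced into the non-klt locus, invoke special termination together with the dimension $\leq n-1$ hypothesis, and finish with the negativity lemma. The one variant worth noting is your choice of boundary: putting $\max(b_i,1)$ on each exceptional divisor gives $K_W+\Gamma_W+M_W\equiv_X G\geq 0$ with $G$ exceptional and supported in $\lfloor\Gamma_W\rfloor$, so every step is $G$-negative and the final negativity-lemma step kills $\pi_*G$ in one stroke. The paper instead truncates every exceptional coefficient to $1$, which yields $K_{X'}+\overline{B}+M'\equiv_X E_1-E_2$ with $E_2$ possibly nonzero, and must then argue separately (via the diminished base locus over $X$) that the MMP with scaling contracts $\operatorname{Supp}(E_1)$ after finitely many steps, after which the flips are $(-E_2)$-negative and special termination applies. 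Your version is shorter, but the price is that $(W,\Gamma_W+M_W)$ is not generalized dlt when $(X,B+M)$ has non-lc places, so the existence of the MMP steps is not literally covered by the results of \cite{BZ16} you cite; running the MMP for the truncated dlt pair instead is precisely what produces the paper's $E_1-E_2$ bookkeeping.

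The one step that would fail as written is the perturbation inside your termination argument. An infinite sequence of flips for the generalized dlt pair induced on a stratum is not a sequence of flips for the generalized klt pair produced by Lemma~\ref{dlt-perturbation}: replacing $B_V$ by $B_{V,\epsilon}\sim_\qq B_V+\epsilon A$ changes which extremal rays are negative, so termination for the perturbed pair says nothing about the given sequence. The mechanism the paper uses (Lemma~\ref{lemma}) is to choose a \emph{minimal} generalized lc center $V$ meeting infinitely many flipping loci; by minimality the adjoint pair $(V,B_V+M_V)$ of Lemma~\ref{adjunction} is already generalized klt where it matters, and after a small $\qq$-factorialization (Lemma~\ref{l-qf}) the induced small ample quasi-flips factor into honest generalized klt flips (Lemma~\ref{fromqftoflip}), contradicting the hypothesis. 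With that substitution your argument goes through.
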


The following lemma is proved in a more general setting in ~\cite[Section 4]{BZ16}.

\begin{lemma}\label{mmpdltmodel}
Let $(Y/Z,B_Y+M_Y)$ be a $\qq$-factorial generalized dlt pair.
Let $A$ be a general effective ample divisor on $Y$ over $Z$,
then we can run a minimal model program for the generalized pair with scaling of $A$ over $Z$.
\end{lemma}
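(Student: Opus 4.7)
The plan is to construct the MMP with scaling of $A$ inductively. Start with $(Y_0, B_0 + M_0) := (Y, B_Y + M_Y)$, $A_0 := A$, and at the $i$-th stage define the scaling constant
\[
\lambda_i := \inf\{\lambda \geq 0 : K_{Y_i} + B_i + M_i + \lambda A_i \text{ is nef over } Z\}.
\]
Since $A_i$ is ample over $Z$, this infimum is finite, and by standard arguments is a non-negative rational number. If $\lambda_i = 0$ the pair is already a minimal model and there is nothing to do; otherwise I must produce a $(K_{Y_i}+B_i+M_i)$-negative extremal ray on which $K_{Y_i}+B_i+M_i+\lambda_i A_i$ vanishes, contract it, and (if the contraction is small) perform the associated flip.

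The central step is applying the cone and contraction theorems in the generalized setting. Since $(Y_i/Z, B_i + M_i)$ is generalized dlt rather than klt, I first invoke Lemma~\ref{dlt-perturbation} with the ample divisor $A_i$ to obtain, for each sufficiently small rational $\epsilon > 0$, a boundary $B_{i,\epsilon} \sim_\qq B_i + \epsilon A_i$ such that $(Y_i/Z, B_{i,\epsilon} + M_i)$ is generalized klt. The cone and contraction theorems for generalized klt pairs from~\cite[Section 4]{BZ16} then yield a $(K_{Y_i}+B_i+M_i)$-negative extremal ray $R_i$ with $(K_{Y_i}+B_i+M_i+\lambda_i A_i)\cdot R_i=0$, together with its contraction $\phi_i : Y_i \to W_i$ over $Z$. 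If $\phi_i$ is divisorial I set $Y_{i+1} := W_i$; if $\phi_i$ is a flipping contraction, the existence of the corresponding generalized flip $\phi_i^+ : Y_{i+1} \to W_i$ is also established in~\cite[Section 4]{BZ16}.

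In either case $(Y_{i+1}/Z, B_{i+1}+M_{i+1})$ remains $\qq$-factorial generalized dlt, as observed right after Definition~\ref{d-dlt}. Setting $A_{i+1}$ to be the divisorial push-forward (or strict transform) of $A_i$, the divisor $K_{Y_{i+1}}+B_{i+1}+M_{i+1}+\lambda_i A_{i+1}$ is nef over $Z$, so $\lambda_{i+1}\leq \lambda_i$, which confirms that this procedure is a legitimate MMP with scaling of $A$. The main obstacle is precisely the existence of the extremal contractions and of the generalized flips in the dlt setting; both are handled by the perturbation trick of Lemma~\ref{dlt-perturbation} combined with the machinery of~\cite{BZ16}, which reduces the problem to the generalized klt case where the methods of~\cite{BCHM} apply.
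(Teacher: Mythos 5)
Your approach is the intended one: the paper gives no argument of its own for this lemma (it simply points to \cite[Section 4]{BZ16}), and the perturbation device of Lemma~\ref{dlt-perturbation} followed by the generalized klt machinery of \cite[Section 4]{BZ16} is exactly the pattern the authors use elsewhere (compare the proof of Lemma~\ref{l-qf}). Two small points need repair, both caused by the same oversight: after the first step the divisor $A_i$ is only the birational transform of an ample divisor, hence big but in general not ample on $Y_i$. Consequently (a) the finiteness of $\lambda_i$ for $i\geq 1$ should be deduced from the nefness over $Z$ of $K_{Y_i}+B_i+M_i+\lambda_{i-1}A_i$ (which you establish at the end of each step), not from ampleness of $A_i$; and (b) you cannot literally invoke Lemma~\ref{dlt-perturbation} ``with the ample divisor $A_i$'' for $i\geq 1$ --- instead take any auxiliary divisor $H_i$ ample over $Z$ on $Y_i$ and perturb by $\epsilon H_i$; since $(K_{Y_i}+B_i+M_i)\cdot R_i<0$ is a fixed negative number for a curve spanning the ray in question, the ray stays negative for the perturbed generalized klt pair once $0<\epsilon\ll 1$, so the contraction and flip still exist by the klt case. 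With these adjustments the argument is complete and coincides with the proof in \cite{BZ16}.
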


\subsection{Generalized dlt adjunction}\label{S-gad}
In this subsection, we recall the construction and properties of generalized divisorial adjunction in ~\cite{BZ16}
and introduce a generalized dlt adjunction formula.

\begin{definition}\label{genadj}
Let $(X/Z,B+M)$ be a generalized log canonical pair, 
assume that $S$ is the normalization of a component of $\lfloor B \rfloor$ 
and $S'$ its birational transform on $X'$.
Replacing the morphism $f\colon X'\rightarrow X$ with a higher birational model,  
we may assume that $f$ is a log resolution for the generalized log canonical pair $(X,B+M)$.
Then, we can write
\[
K_{X'}+B'+M'= f^*(K_X+B+M),
\]
and 
\[
K_{S'}+B_{S'}+M_{S'}= (K_{X'}+B'+M')|_{S'},
\]
where $B_{S'}=(B-{S'})|_{S'}$ and $M_{S'}\sim_\rr M|_{S'}$.
We have an induced morphism $f_S \colon S'\rightarrow S$
and we let ${f_S}_*(B_{S'})=B_S$ and ${f_S}_*(M_{S'})=M_S$.
Hence, we can consider the pair $(S/Z,B_S+M_S)$ as a generalized pair
with b-nef b-Cartier divisor $M_{S'}$.
\end{definition}

\begin{lemma}
The divisor $B_{S}$ is effective. The generalized pair $(S/Z,B_S+M_S)$ is generalized log canonical.
\end{lemma}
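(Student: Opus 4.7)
The plan is to work on a sufficiently high log resolution $f\colon X'\to X$ of $(X,B+M)$ to which $M'$ descends and on which the strict transform $S'$ of $S$ is smooth and meets the other components of $\mathrm{Supp}(B')$ transversally. Restricting $K_{X'}+B'+M' = f^{*}(K_X+B+M)$ to $S'$ and using the standard adjunction identity $K_{S'} = (K_{X'}+S')|_{S'}$ yields
\[
K_{S'}+B_{S'}+M_{S'} = f_S^{*}(K_S+B_S+M_S),
\]
where $f_S\colon S'\to S$ is the induced birational morphism. Since $M_{S'}=M'|_{S'}$ is nef and $M_S = f_{S*}M_{S'}$, the triple $(S/Z,B_S+M_S)$ is a generalized pair in the sense of Definition \ref{d-glp} as soon as $B_S$ is shown to be effective.

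For the generalized log canonical property, I would take any prime divisor $E$ over $S$ and, after blowing up further, assume $E$ is realized as a prime divisor on $S'$. Because $X'$ is smooth and $\mathrm{Supp}(B')$ has simple normal crossings, $E = T\cap S'$ for a unique component $T\neq S'$ of $B'$. Then $\mathrm{coeff}_E(B_{S'}) = \mathrm{coeff}_T(B'-S') = \mathrm{coeff}_T(B')$, and this coefficient is at most $1$ by generalized log canonicity of $(X/Z,B+M)$. Hence $a_E(S/Z,B_S+M_S) = 1 - \mathrm{coeff}_E(B_{S'}) \geq 0$, so $(S/Z,B_S+M_S)$ is generalized log canonical.

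For the effectiveness of $B_S$, the difficulty is that $B_{S'}$ itself need not be effective: an $f$-exceptional divisor $E$ on $X'$ with $a_E(X,B+M) > 1$ has negative coefficient in $B'$ and may meet $S'$ in a divisor. I would argue by cases on the position of $f(E)\subseteq X$, recalling that $f(E)$ has codimension at least two in $X$. If $f(E)\not\subset S$, then $\dim(f(E)\cap S) \leq \dim f(E) - 1 \leq \dim S - 2$, so $E\cap S'$ pushes forward to zero under $f_{S*}$ and contributes nothing to $B_S$. If $f(E)\subset S$ and $f(E)$ is a prime divisor of $S$, then one must sum the contributions from all such divisors $E$ mapping to the same prime $Z\subset S$ and show the total coefficient is non-negative. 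This is precisely the content of the classical effectiveness of Shokurov's different $\mathrm{Diff}_S(B-S)$ for log canonical pairs, and its extension to the generalized setting in \cite{BZ16}. With that identification, $B_S = \mathrm{Diff}_S(B-S)$ and $B_S \geq 0$ at every codimension-one point of $S$, hence everywhere.

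The main obstacle is therefore the codimension-two local analysis of the different needed in the third paragraph, which is standard adjunction theory for log canonical pairs and was generalized in \cite{BZ16}; once this is granted, the rest of the proof reduces to the direct dimension count and coefficient computation described above.
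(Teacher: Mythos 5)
Your argument is correct in outline and ultimately rests on the same source as the paper: the paper's entire proof of this lemma is the citation to \cite[Remark 4.8]{BZ16}, and your reduction of both claims to a codimension-two local computation of the (generalized) different is precisely what that remark carries out, so you have simply unpacked the standard framework around the same citation. Two small imprecisions: a prime divisor $E$ on $S'$ lies on \emph{at most} one component $T\neq S'$ of $\mathrm{Supp}(B')$ (possibly none, in which case $\mathrm{coeff}_E(B_{S'})=0$), not exactly one; and in the effectiveness argument you should also note the case $f(E)\subset S$ with $\mathrm{codim}_S f(E)\geq 2$, which contributes nothing to $f_{S*}B_{S'}$ for the same dimension reason as your first case.
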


\begin{proof}
This is proved in~\cite[Remark 4.8]{BZ16}.
\end{proof}

\vspace{0.05cm}

\begin{proposition}~\label{divadj}
Let $d$ be a natural number and let $\Lambda$ be a set of nonegative rational numbers satisfying the DCC.
There is a set of nonegative rational numbers $\Omega$ satisfying the DCC, which only depends on $d$ and $\Lambda$,
such that if 
\begin{enumerate}
\item $(X/Z,B+M)$ is generalized log canonical of dimension $d$, 
\item the coefficients of $B$ belong to $\Lambda$,
\item we can write $M'=\sum \mu_i M'_i$, where $M'_i$ are Cartier divisors nef over $Z$ with $\mu_i \in \Lambda$, and
\item the generalized pair $(S/Z,B_S+M_S)$ is constructed as in Definition~\ref{genadj},
\end{enumerate}
then the coefficients of $B_S$ belong to $\Omega=\Omega (\Lambda, d)$.
\end{proposition}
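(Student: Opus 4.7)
The plan is to follow the strategy of \cite[Proposition 4.9]{BZ16}: for each prime divisor $D\subset S$, express $1-\mu_D$ --- where $\mu_D$ denotes the coefficient of $D$ in $B_S$ --- as a generalized log canonical threshold whose input data lies uniformly in $\mathcal{B}(\Lambda\cup\{1\})$, and then invoke the ACC for generalized log canonical thresholds \cite[Theorem 1.5]{BZ16} (recalled just after Definition~\ref{d-glp}).

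The first step is localization. The coefficient $\mu_D$ depends only on an \'etale neighborhood of the generic point of $D$ in $X$, so after localizing and passing to a log resolution $f\colon X'\to X$ on which $S'$ and all of the nef Cartier building blocks $M'_i$ appearing in $M'=\sum \mu_i M'_i$ (with $\mu_i\in \Lambda$) have simple normal crossings with each other, the question reduces to a two-dimensional computation transverse to $D$ in which the boundary coefficients and the nef-part coefficients still lie in $\Lambda$. The second step is to derive a local lct formula. In the classical case ($M=0$) the divisorial adjunction formula of Shokurov gives
\[
1-\mu_D \;=\; \operatorname{lct}\bigl((X/Z,\,B-\beta D_X)\,\bigm|\,D_X\bigr)
\]
near the generic point of $D$, where $D_X$ is the closure of $D$ in $X$ and $\beta$ is the coefficient of $D_X$ in $B$ (set to zero if $D_X$ is not a component of $B$). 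The generalized analogue incorporates the contribution of $M'|_{S'}$ to the different as an extra test term, giving
\[
1-\mu_D \;=\; \operatorname{lct}\bigl((X/Z,\,B-\beta D_X+M)\,\bigm|\,D_X+P\bigr),
\]
where $P$ is the trace on $X$ of a nef $\qq$-Cartier combination of the $M'_i$ assembled from the adjunction data. By construction both the auxiliary generalized pair and the test divisor $D_X+P$ have data in $\mathcal{B}(\Lambda\cup\{1\})$, uniformly in the original pair $(X/Z,B+M)$.

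Applying \cite[Theorem 1.5]{BZ16} in dimension $d$, the set of such lct values is contained in the ACC set $\operatorname{LCT}_d(\Lambda\cup\{1\})$, and therefore the coefficients $\mu_D$ lie in the DCC set
\[
\Omega(\Lambda,d) \;:=\; \bigl\{\,1-t \;:\; t\in \operatorname{LCT}_d(\Lambda\cup\{1\})\,\bigr\}\cap [0,1].
\]
The main obstacle is the derivation of the local formula above in the generalized setting: one has to track carefully how the Cartier pieces $M'_i$ contribute upon restriction to $S'$, and verify that the resulting test divisor $D_X+P$ fits uniformly into $\mathcal{B}(\Lambda\cup\{1\})$. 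Once this bookkeeping is completed, the ACC for generalized log canonical thresholds immediately yields the DCC property of $\Omega(\Lambda,d)$.
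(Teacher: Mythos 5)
The paper gives no argument here; it simply cites \cite[Proposition 4.9]{BZ16}, so the question is whether your reconstruction is a valid proof of that result. It is not, for two reasons. First, your central ``local lct formula'' is ill-formed: $D$ is a prime divisor on $S$, hence its closure $D_X$ in $X$ has codimension two, so $D_X$ is neither a possible component of $B$ (there is no coefficient $\beta$) nor an admissible test object in a threshold ${\rm lct}((X,\cdot)\mid D_X+P)$ --- the definition recalled in the paper requires the test data $N+P$ to consist of ($\qq$-Cartier) divisors. Even after replacing $D_X$ by a genuine divisor $\Theta$ through the generic point $\eta_D$, the coefficient $\mu_D$ of $D$ in $B_S$ is not $1$ minus an lct: by inversion of adjunction the threshold of $\Theta$ equals $(1-\mu_D)\,m/l$, where $m$ is the Cartier index of $K_X+S$ at $\eta_D$ and $l$ is the multiplicity of $\Theta|_S$ along $D$, and these integers are not determined by $\Lambda$ and $d$. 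Second, invoking \cite[Theorem 1.5]{BZ16} here is circular with respect to the source: in \cite{BZ16} the ACC for generalized lct's in dimension $d$ is proved via the global ACC in dimension $d-1$ applied to non-klt centers, and setting up that restriction requires precisely the adjunction statement (Proposition 4.9) you are trying to prove.

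The actual argument of \cite[Proposition 4.9]{BZ16} is an explicit local computation rather than an appeal to ACC for thresholds. One localizes at $\eta_D$ (your first step is the right one), reduces to a surface germ, and computes on a resolution that
\[
\mu_D \;=\; 1-\frac{1}{m}+\frac{\sum_i l_i b_i+\sum_j r_j\mu_j}{m},
\]
where $m$ is a positive integer (a Cartier index at $\eta_D$), the $l_i,r_j$ are non-negative integers, the $b_i\in\Lambda$ are coefficients of $B$, and the $\mu_j\in\Lambda$ are the coefficients of the nef part; the nef Cartier pieces $M'_i$ contribute the terms $r_j\mu_j/m$ via the negativity lemma. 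One then concludes with the elementary fact that the set of numbers of this shape lying in $[0,1]$ satisfies the DCC whenever $\Lambda$ does. The step you defer as ``bookkeeping'' is in fact the entire content of the proposition, and it must be carried out by this direct computation rather than by quoting the ACC for generalized log canonical thresholds.
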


\begin{proof}
This is proved in~\cite[Proposition 4.9]{BZ16}.
\end{proof}

\begin{lemma}\label{adjunction}
Let $\Lambda$ be a set of nonegative rational numbers satisfying the DCC condition and $d \in \mathbb{Z}_{\geq 1}$.
Then there is a set of nonegative rational numbers $\Theta$ satisfying the DCC condition, 
which only depends on $d$ and $\Lambda$, such that if 
\begin{enumerate}
\item $(Y/Z,B_Y+M_Y)$ is a generalized dlt pair of dimension $d$,
\item the coefficients of $B_Y$ belong to $\Lambda$,
\item we can write $M'=\sum \mu_i M'_i$, where $M'_i$ are Cartier divisors nef over $Z$ with $\mu_i\in \Lambda$, and
\item $V$ is a generalized log canonical center of $(Y/Z,B_Y+M_Y)$,
\end{enumerate}
then we can write an adjunction formula
\[
(K_Y+B_Y+M_Y)|_V \sim_\rr K_V+B_V+M_V,
\]
where $(V/Z,B_V+M_V)$ is a generalized dlt pair, the coefficients of $B_V$ belong to $\Theta$
and we can write $M'_V=\sum \mu_i M'_{i,V}$, where $M'_{i,V}$ are Cartier divisors and $\mu_i\in \Lambda$.
\end{lemma}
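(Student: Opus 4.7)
The plan is to prove the lemma by iterating divisorial adjunction (Proposition~\ref{divadj}). Since $(Y/Z,B_Y+M_Y)$ is generalized dlt, every generalized lc center is a stratum of $\lfloor B_Y\rfloor$: there exist components $S_1,\dots,S_k$ of $\lfloor B_Y\rfloor$ such that $V$ is an irreducible component of $S_1\cap\cdots\cap S_k$, with $k\leq d-1$. Such dlt strata are normal, so I may perform adjunction onto them directly. The strategy is to induct on $k$, restricting at each stage from the current variety to a component of the floor of the new boundary that contains $V$.

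For the inductive step I apply Proposition~\ref{divadj} to $(Y/Z,B_Y+M_Y)$ with $S:=S_1$: it produces a generalized log canonical pair $(S/Z,B_S+M_S)$ of dimension $d-1$, whose boundary coefficients lie in the DCC set $\Omega(\Lambda,d)$, and whose nef part on the chosen log resolution $X'\to Y$ is represented by $M'_S=\sum_i \mu_i (M'_i|_{S'})$. In particular, the decomposition of the nef part as a $\Lambda$-linear combination of nef Cartier divisors is preserved with the same coefficients $\mu_i\in\Lambda$. To continue the induction, I must verify that $(S/Z,B_S+M_S)$ is itself generalized dlt. This follows directly from Definition~\ref{d-dlt} applied to $(Y,B_Y+M_Y)$: if $U\subset Y$ is the dlt open subset, then $U\cap S$ is smooth, the components of $B_Y$ other than $S$ cut out a simple normal crossings divisor on $U\cap S$ that accounts for $B_S|_{U\cap S}$, and every generalized non-klt center of $(S,B_S+M_S)$ is a generalized non-klt center of $(Y,B_Y+M_Y)$ contained in $S$ and hence a stratum of $\lfloor B_S\rfloor$ meeting $U\cap S$.

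Iterating this procedure at most $d-1$ times brings us down to $V$. At each step the new boundary coefficients lie in a DCC set obtained by applying the construction of Proposition~\ref{divadj} to the previous DCC set with a dimension parameter at most $d$, and the nef part retains a decomposition $\sum_i\mu_i M'_{i,\bullet}$ with the same coefficients $\mu_i\in\Lambda$. Setting $\Theta$ equal to the union of these finitely many iterated DCC sets across $k\leq d$ yields a DCC set depending only on $\Lambda$ and $d$, and the adjunction formula $(K_Y+B_Y+M_Y)|_V\sim_\qq K_V+B_V+M_V$ is obtained by composing the successive restrictions.

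The main obstacle is the careful verification, at each stage of the induction, that divisorial adjunction preserves the generalized dlt condition. This reduces to tracking the open simple normal crossings locus $U$ through the sequence of restrictions and checking that the generalized non-klt centers of the restricted pair correspond exactly to those generalized non-klt centers of the ambient pair that lie in the boundary component being restricted to; both assertions follow formally from Definition~\ref{d-dlt} but must be checked at every step. Once this is in place, combining with Proposition~\ref{divadj} to control both the boundary coefficients and the nef part decomposition gives the lemma.
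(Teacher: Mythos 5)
Your proposal is correct and follows essentially the same route as the paper: induction on the codimension of the lc center, applying Proposition~\ref{divadj} to a component $S$ of $\lfloor B_Y\rfloor$ containing $V$, verifying via the dlt open set $U$ that $(S/Z,B_S+M_S)$ is again generalized dlt with $V$ a stratum of $\lfloor B_S\rfloor$, and iterating while the nef-part decomposition $\sum\mu_i M'_i$ restricts compatibly. The only cosmetic difference is that you take $\Theta$ to be a finite union of the iterated DCC sets where the paper takes the fully composed set $\Omega(\Omega(\dots(\Omega(\Lambda,d),d-1),\dots),1)$; both are DCC and depend only on $d$ and $\Lambda$.
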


\begin{proof}
We proceed by induction on the codimension of the log canonical center.
If the log canonical center has codimension one, then this is Proposition~\ref{divadj}.
If the log canonical center $V$ has higher codimension, then $V$ is contained in some divisor $S$ which appears with coefficient one in $B$. 
Therefore, by Proposition~\ref{divadj} we can do a divisorial generalized adjunction to $S$.
We claim that $(S/Z,B_S+M_S)$ is generalized dlt and $V$ is a generalized non-klt center of this generalized pair.
Indeed, there is an open set $U\subset X$ so that $U_S:=U\cap S$ is smooth, $B_Y|_{U_S}$ has simple normal crossing, 
all the generalized non-klt centers of $(S/Z,B_S+M_S)$ intersect $U_S$, and these centers are given by strata of $\lfloor B_S \rfloor$.
In particular, $V$ is an intersection of a non-empty set of components of $\lfloor B_S \rfloor$, 
hence it is a generalized non-klt center.
Thus, by the induction hypothesis on the codimension, we can write an adjunction formula
\[
(K_S+B_S+M_S)|_V \sim_\rr K_V+B_V+M_V,
\]
which induces an adjunction formula for $(Y/Z,B_Y+M_Y)$.
\end{proof}

\begin{remark}
Observe that the set $\Theta$ of Lemma~\ref{adjunction} 
is 
\[
\Omega(\Omega( \dots (\Omega(\Lambda,d),d-1),\dots,2),1),
\]
where $\Omega$ is the set of Proposition~\ref{divadj}.
\end{remark}

\begin{corollary}
If $V$ is a minimal non-klt center of the generalized dlt pair $(Y,B_Y+M_Y)$,
then the induced generalized pair $(V,B_V+M_V)$ is generalized klt.
\end{corollary}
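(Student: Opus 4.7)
The plan is to combine Lemma~\ref{adjunction} with the minimality of $V$ as a generalized non-klt center. First, I would apply Lemma~\ref{adjunction} directly to the minimal non-klt center $V$ to obtain an adjunction formula
\[
(K_Y+B_Y+M_Y)|_V \sim_\R K_V+B_V+M_V,
\]
where $(V/Z,B_V+M_V)$ is itself a generalized dlt pair. The task is then to upgrade ``dlt'' to ``klt.''

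The key reduction is the observation that, for a generalized dlt pair, being klt is equivalent to $\lfloor B_V \rfloor = 0$. Indeed, by Definition~\ref{d-dlt} applied to $(V/Z,B_V+M_V)$, every generalized non-klt center of $(V/Z,B_V+M_V)$ is a stratum of $\lfloor B_V\rfloor$; if $\lfloor B_V\rfloor = 0$ then there are no such centers, which forces all log discrepancies to be positive. So I would argue by contradiction: assume there is a prime component $T$ of $\lfloor B_V\rfloor$, and show that $T$ gives rise to a generalized non-klt center of $(Y/Z,B_Y+M_Y)$ properly contained in $V$, contradicting the minimality of $V$.

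The inductive structure of Lemma~\ref{adjunction} is what makes this traceback possible: the adjunction $Y\to V$ is built as a chain of divisorial adjunctions along components of $\lfloor B_Y\rfloor$ containing $V$, namely $(Y,B_Y+M_Y) \to (S_1,B_{S_1}+M_{S_1}) \to \cdots \to (V,B_V+M_V)$, each step of which is generalized divisorial adjunction as in Definition~\ref{genadj}. At each step, a component of the floor of the boundary on the smaller variety is either a component of the floor on the bigger variety restricted down, or arises as a generalized non-klt place of the bigger pair whose center is contained in the divisor being restricted to; in either case it gives a generalized non-klt center of the larger pair that sits inside the smaller variety. Iterating, $T\subsetneq V$ is a generalized non-klt center of $(Y/Z,B_Y+M_Y)$ strictly contained in $V$, contradicting minimality. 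Hence $\lfloor B_V\rfloor = 0$ and $(V/Z,B_V+M_V)$ is generalized klt.

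The main obstacle I anticipate is justifying cleanly the traceback of non-klt centers through iterated generalized divisorial adjunction. In the classical setting this compatibility is standard, and in the generalized setting it is implicit in the construction of Section~\ref{S-gad} (particularly Proposition~\ref{divadj} and the explicit way $B_{S'}$ and $M_{S'}$ are defined by restriction), but it should be invoked carefully: one must verify that if $E$ is a non-klt place over $V$ in the adjunction, then $E$ also appears as a non-klt place over $Y$ with the same vanishing of log discrepancy, so that its center on $Y$ — which is contained in $V$ — is genuinely a generalized non-klt center of $(Y/Z,B_Y+M_Y)$.
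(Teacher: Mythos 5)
Your argument is correct and is exactly the derivation the paper intends (the corollary is stated without proof, as an immediate consequence of Lemma~\ref{adjunction}): generalized dlt plus $\lfloor B_V\rfloor=0$ gives generalized klt, and any component of $\lfloor B_V\rfloor$ would, by the inductive divisorial-adjunction construction in the proof of Lemma~\ref{adjunction} (where the non-klt centers of $(S,B_S+M_S)$ are identified with strata of $\lfloor B_Y\rfloor$ contained in $S$), yield a generalized non-klt center of $(Y,B_Y+M_Y)$ strictly contained in $V$, contradicting minimality. The traceback step you flag as the main obstacle is already supplied by the paper's proof of Lemma~\ref{adjunction} via the open set $U$ in Definition~\ref{d-dlt}, so no further justification is needed.
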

\begin{lemma}\label{l-g}
Let $\phi: X\dasharrow X^+$ be a flip for a generalized $\qq$-factorial dlt log pair $(X/Z,B+M)$. Assume that $V$ is a generalized non-klt center and $\phi$ is an isomorphism at its generic point $\eta _V$ such that the induced map $\psi : V\dasharrow V^+=:\phi _* V$ induces an isomorphism of generalized log pairs $(V/Z,B_V+M_V)\cong (V^+/Z,B_{V^+}+M_{V^+})$ on an open subset $V^0\subset V$. Here 
\[
K_V+B_V+M_V=(K_X+B+M)|_V  \text{ and } K_{V^+}+B_{V^+}+M_{V^+}=(K_{X^+}+B^+ +M^+)|_{V^+}
\] 
are induced by adjunction. Then $\phi$ is an isomorphism on a neighborhood of $V^0$ in $X$.

\end{lemma}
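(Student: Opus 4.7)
First I would take a common log resolution $\mu \colon Y \to X$, $q \colon Y \to X^+$ of $\phi$, chosen so that every $\mu$-exceptional divisor has center contained in the flipping locus of $\phi$ (for instance, by starting from a normalization of the graph of $\phi$ inside $X \times_Z X^+$ and only resolving singularities over the flipping and flipped loci). Let $\widetilde V \subseteq Y$ be the common strict transform of $V$ and $V^+$; these coincide since $\phi$ is an isomorphism at $\eta_V$. Write $\mu_V \colon \widetilde V \to V$ and $q_V \colon \widetilde V \to V^+$ for the induced morphisms; using that $V^0$ lies in the dlt-smooth locus, I may further arrange that $\mu_V$ restricts to an isomorphism above $V^0$. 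By the negativity lemma for flips, equivalently Proposition~\ref{monotonicity}, the divisor
\[
E \;:=\; \mu^{*}(K_X+B+M) \,-\, q^{*}(K_{X^+}+B^{+}+M^{+})
\]
is effective, and with the above choice of $Y$ it satisfies $\operatorname{Supp}(E) = \operatorname{Ex}(\mu)$.

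The second step is to restrict to $\widetilde V$ via the iterated generalized adjunction of Lemma~\ref{adjunction}, which gives
\[
E|_{\widetilde V} \;=\; \mu_V^{*}(K_V+B_V+M_V) \,-\, q_V^{*}(K_{V^+}+B_{V^+}+M_{V^+}).
\]
The hypothesis that $\psi$ induces an isomorphism of generalized log pairs on $V^0$ gives the identity $\psi^{*}(K_{V^+}+B_{V^+}+M_{V^+})|_{V^{+,0}} = (K_V+B_V+M_V)|_{V^0}$. Since $q_V = \psi \circ \mu_V$ over $\widetilde V^0 := \mu_V^{-1}(V^0)$, pulling back yields $E|_{\widetilde V^0} = 0$. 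Because $E|_{\widetilde V}$ is an effective divisor on $\widetilde V$ (using $\widetilde V \not\subseteq \operatorname{Supp}(E)$ and $E \geq 0$), its vanishing on the open set $\widetilde V^0$ forces every irreducible component of $E|_{\widetilde V}$ to miss $\widetilde V^0$; equivalently, $\operatorname{Supp}(E) \cap \widetilde V^0 = \emptyset$ as subsets of $Y$.

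To conclude, I would argue by contradiction: if some $x \in V^0$ belonged to the flipping locus, then $\mu^{-1}(x)$ would be positive-dimensional, contained in $\operatorname{Ex}(\mu)$, and, by Zariski's Main Theorem applied to the birational morphism $\mu$ with normal target $X$, connected. Let $\widetilde x \in \widetilde V^0$ be the unique preimage of $x$ under the isomorphism $\mu_V|_{\widetilde V^0}$. If $\widetilde x$ were not in $\operatorname{Ex}(\mu)$, then $\{\widetilde x\}$ would be an open (hence clopen) subset of $\mu^{-1}(x)$, forcing $\mu^{-1}(x) = \{\widetilde x\}$ and contradicting positive-dimensionality; so $\widetilde x \in \operatorname{Ex}(\mu) = \operatorname{Supp}(E)$, contradicting $\operatorname{Supp}(E) \cap \widetilde V^0 = \emptyset$. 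Hence $V^0$ is disjoint from the flipping locus, and $\phi$ is an isomorphism on a neighborhood of $V^0$ in $X$. The main technical obstacle is orchestrating the choice of the common resolution $Y$ to ensure simultaneously that $\operatorname{Supp}(E) = \operatorname{Ex}(\mu)$, that $\mu_V$ is an isomorphism above $V^0$, and that $\widetilde V$ is not contained in $\operatorname{Supp}(E)$; the last condition is automatic when $V$ is a divisor, but for $V$ of higher codimension it requires iterating divisorial adjunction along a flag of components of $\lfloor B \rfloor$ containing $V$, which is permitted by the dlt assumption.
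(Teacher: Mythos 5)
Your proof is correct and follows essentially the same route as the paper's: pass to the normalized graph of $\phi$, identify the support of $E=\mu^*(K_X+B+M)-q^*(K_{X^+}+B^++M^+)$ with the locus lying over the flipping locus via the strict monotonicity of log discrepancies, restrict to the strict transform of $V$, and use the hypothesis to force $E$ to vanish over $V^0$. The one soft spot is your closing appeal to the positive-dimensionality of $\mu^{-1}(x)$ for $x$ in the flipping locus, which you assert without justification (it fails for general small birational maps and is special to flips); it is true here, but it is most easily deduced from the stronger identity $\operatorname{Supp}(E)=\mu^{-1}(\operatorname{Ex}(f))$ --- the \emph{full} preimage of the flipping locus, not merely $\operatorname{Ex}(\mu)$ --- which is exactly what the monotonicity argument yields and what the paper uses to conclude directly, making your detour through Zariski's Main Theorem and connectedness of fibers unnecessary.
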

\begin{proof} Let $f:X\to W$ be the flipping contraction,  $X'$ be the normalization of the main component of $X\times _Z X^+$ and $p:X'\to X$, $q:X'\to X^+$ the projections, then $p^*(K_X+B+M)=q^*(K_{X^+}+B^++M^+)+E$ where $E\geq 0$ and  ${\rm Supp }(E)=p^{-1}({\rm Ex}(f))$. 
The inclusion $\subset$ is clear. Suppose that $x\in p^{-1}({\rm Ex}(f))$, and $F$ is a divisor with center $x$, then $a_F(X,B+M)<a_F(X^+,B^++M^+)$ (as $\phi$ is a flip and the center of $F$ is contained in the flipping locus). On the other hand,
if $x$ is not contained in the support of $E$, then $p^*(K_X+B+M)=q^*(K_{X+}+B^++M^+)$ in a neighborhood of $x\in X'$ and so $a_F(X,B+M)=a_F(X^+,B^++M^+)$. Therefore $x$ is contained in the support of $E$ as required.

Abusing notation, we also denote $p:V'\to V$ and $q:V'\to V^+$ where $V'$ is the strict transform of $V$ (note that $p:X'\to X$ is an isomorphism around the generic point of $V$).
We have $p^*(K_V+B_V+M_V)=q^*(K_{V^+}+B_{V^+}+M_{V^+})+E|_{V'}$. If $\psi$ is an isomorphism of log pairs on $V^0$, then 
$E|_{V'\cap p^{-1}V^0}=0$ so that $V^0\cap {\rm Ex}(f)=\emptyset$ and hence $\phi$ is an isomorphism on a neighborhood of $V^0$.
\end{proof}

\section{Weak Zariski decompositions and termination of flips} 

\subsection{WZD and termination of flips}

\begin{lemma}\label{mmplcc}
Let $(Y/Z, B_Y+M_Y)$ be a $\qq$-factorial dlt pair and 
\[
 \xymatrix@C=2em{
(Y/Z,B_Y+M_Y)\ar@{-->}[r]^-{\pi_1} & (Y_1/Z,B_{Y_1}+M_{Y_1})\ar@{-->}[r]^-{\pi_2} & (Y_2/Z,B_{Y_2}+M_{Y_2}) \ar@{-->}[r]^-{\pi_3} & 
\cdots 
 }
\]
be a minimal model program which is an isomorphism at the generic point of a log canonical center $V$ of $(Y/Z,B_Y+M_Y)$.
Then, the induced sequence of birational maps (see \S \ref{S-gad})
\begin{equation}\label{mmponV}
 \xymatrix@C=2em{
(V/Z,B_V+M_V)\ar@{-->}[r]^-{\pi_1} & (V_1/Z,B_{V_1}+M_{V_1})\ar@{-->}[r]^-{\pi_2} & (V_2/Z,B_{V_2}+M_{V_2}) \ar@{-->}[r]^-{\pi_3} & 
\cdots 
 }
\end{equation}
is a sequence of ample quasi-flips or identities for the generalized dlt pair $(V/Z,B_V+M_V)$.
\end{lemma}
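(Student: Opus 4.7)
The plan is to treat a single step of the MMP; the general case follows by concatenation. Re-indexing, consider one step $\pi \colon Y \dasharrow Y^+$ with flipping contraction $\phi \colon Y \to W$ and flipped contraction $\phi^+ \colon Y^+ \to W$ (the latter being the identity in the divisorial-contraction case). By hypothesis $\pi$ is an isomorphism at the generic point $\eta_V$ of $V$, so the strict transform $V^+ \subset Y^+$ of $V$ is well-defined and birational to $V$. Since the dlt property is preserved under steps of an MMP (as remarked after Definition \ref{d-dlt}), $V^+$ is a generalized log canonical center of $(Y^+, B^+ + M^+)$. Applying Lemma \ref{adjunction} yields generalized dlt pair structures $(V/Z, B_V + M_V)$ and $(V^+/Z, B_{V^+} + M_{V^+})$. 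Taking a common log resolution of $Y$ and $Y^+$ on which the nef b-divisor descends, and restricting to the strict transform of $V$, realizes $M_V$ and $M_{V^+}$ as traces of a single nef b-Cartier divisor on $V$, verifying condition (4) of the definition of a quasi-flip.

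Next, the morphisms $\phi$ and $\phi^+$ restrict to projective birational morphisms $\phi_V \colon V \to W_V$ and $\phi_V^+ \colon V^+ \to W_V$, where $W_V := \phi(V) = \phi^+(V^+)$. If both restrictions are isomorphisms, then Lemma \ref{l-g}, applied with $V^0$ the generic point of $V$, shows that $\pi$ is an isomorphism in a neighborhood of $V$, so $\pi|_V$ is the identity. Otherwise we claim $\pi|_V$ is an ample quasi-flip. Condition (1) is immediate from Lemma \ref{adjunction} since dlt implies lc. For the ampleness conditions (and hence (2)), the restriction of a $W$-ample divisor to $V$ is $W_V$-ample, so both $-(K_V + B_V + M_V)$ and $K_{V^+} + B_{V^+} + M_{V^+}$ are ample over $W_V$, as required for an ample quasi-flip.

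The main obstacle is the boundary inequality (3): $(\phi_V^+)_* B_{V^+} \leq (\phi_V)_* B_V$. To prove it, fix a common log resolution $g \colon Y' \to Y$, $g^+ \colon Y' \to Y^+$ dominating the higher model on which the nef b-divisor descends, and let $V' \subset Y'$ be the strict transform of $V$. The negativity lemma applied to $\pi$ yields an effective divisor $E \geq 0$ on $Y'$, exceptional over $Y^+$, with
\[
g^*(K_Y + B + M) - (g^+)^*(K_{Y^+} + B^+ + M^+) = E.
\]
Since the common nef b-divisor descends to $Y'$, the $M$-contributions cancel in the induced adjunction formulas on $V'$ obtained from either side, so the two boundary divisors produced on $V'$ differ exactly by $E|_{V'} \geq 0$. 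Pushing forward to $W_V$ via $V' \to V \to W_V$ (which agrees with $V' \to V^+ \to W_V$) converts this into the desired inequality. The delicate point is verifying that generalized divisorial adjunction is compatible with restriction to $V'$ in this precise sense, i.e.\ that the $Y$- and $Y^+$-adjunction divisors on $V'$ differ by exactly $E|_{V'}$ and that exceptional contributions on $V'$ push forward to zero on $W_V$; this is a technical but essentially standard consequence of the construction of generalized adjunction recalled in Section \ref{S-gad}, but it requires careful bookkeeping of the exceptional components.
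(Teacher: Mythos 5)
Your argument is essentially correct, but it is worth knowing that the paper does not actually write out this proof: it simply cites \cite[Proposition 4.3]{Mor18} for the case where $V$ is a divisorial log canonical center and then says that the general case ``follows by induction on the codimension.'' What you have written is, in substance, a direct reconstruction of that cited proposition: the key mechanism in both is the negativity lemma on a common resolution, producing $g^*(K_Y+B_Y+M_Y)-(g^+)^*(K_{Y^+}+B_{Y^+}+M_{Y^+})=E\geq 0$, restricting $E$ to the strict transform of the center (where the $M$-contributions cancel because the nef b-divisor descends to the common resolution), and pushing forward to obtain condition (3); the ampleness conditions come from restricting relatively ample divisors, exactly as you say. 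The one place where your route genuinely diverges is the higher-codimension case: you restrict $E$ directly to $V'$ and acknowledge the bookkeeping is delicate, whereas the paper's induction on codimension is cleaner --- one applies the divisorial case to a component $S$ of $\lfloor B_Y\rfloor$ containing $V$ and then repeats on $(S,B_S+M_S)$. If you adopt that induction, note the statement you are inducting on must be phrased for sequences of \emph{ample quasi-flips} rather than MMP steps, since after the first adjunction the induced sequence on $S$ is no longer an MMP; your single-step framework already accommodates this, so the adjustment is small. Two minor points: (i) your appeal to Lemma \ref{l-g} in the ``identity'' case runs in the wrong direction --- that lemma deduces that $\pi$ is an isomorphism near $V^0$ \emph{from} an isomorphism of log pairs on $V^0$, which is not what you need; if both $\phi_V$ and $\phi_V^+$ are isomorphisms the step is simply declared an identity and no lemma is required; (ii) to see that $V^+$ is still a log canonical center of $(Y^+,B_{Y^+}+M_{Y^+})$ you should invoke Proposition \ref{monotonicity} (the lc place over $V$ keeps log discrepancy zero because $V$ is not contained in the flipping locus), not merely the preservation of the dlt property.
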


\begin{proof}
This is proved in~\cite[Proposition 4.3]{Mor18} for the divisorial generalized adjunction.
The general case follows by induction on the codimension of the log canonical center.
\end{proof}

\begin{lemma}\label{tercod1}
Consider a sequence of ample quasi-flips for a generalized klt pair.
Assume that the coefficients of the boundary divisors which appear
in this sequence belong to a set satisfying the DCC.
Then, the sequence of quasi-flips terminate in codimension one, i.e.,
after finitely many ample quasi-flips, all the quasi-flips are small.
\end{lemma}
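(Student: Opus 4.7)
The plan is a Shokurov-style difficulty argument, combining the monotonicity of log discrepancies from Proposition~\ref{monotonicity} with the ACC property of $1-\Lambda$ coming from the DCC hypothesis on $\Lambda$.

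First, set $\delta := \inf(\Lambda \setminus \{0\}) > 0$, which exists by DCC. By klt-ness, every nonzero coefficient of every $B_i$ lies in $[\delta, 1)$, so whenever $E$ is a prime divisor on some $X_i$ with positive coefficient,
\[
a_E(X_i, B_i + M_i) \;=\; 1 - \operatorname{coeff}_E(B_i) \;\in\; (0, 1-\delta] \cap (1-\Lambda),
\]
a subset of the ACC set $1-\Lambda$.

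Next, I would show that any fixed divisor $E$ over $X_0$ can appear as a prime divisor with positive coefficient on only finitely many $X_i$. Suppose $E$ is such on both $X_j$ and $X_k$ with $j<k$ but fails to be so at some intermediate index. Then $E$ was contracted by some divisorial quasi-flip between $X_j$ and $X_k$, and at that step the center of $E$ on the source coincides with the contracted divisor itself, hence lies in the flipping locus. Proposition~\ref{monotonicity} then forces a strict increase $a_E(X_{\ell+1}) > a_E(X_\ell)$ at that step, and monotonicity at subsequent steps yields $a_E(X_k) > a_E(X_j)$. Since both values lie in the ACC set $(1-\Lambda) \cap (0, 1-\delta]$, only finitely many such reappearances can occur.

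The main obstacle is to bound the total pool of divisors $E$ that can ever appear with positive coefficient somewhere in the sequence. Any such $E$ must satisfy $a_E(X_0, B_0 + M_0) \leq 1-\delta$ by applying monotonicity backwards. My plan is to combine this with a Shokurov-type finiteness statement: the set of divisors over $X_0$ with log discrepancy at most $1-\delta$ that can actually be realized as prime divisors on some ample-quasi-flip-reachable model of $(X_0/Z, B_0+M_0)$ is finite, owing to klt-ness of the initial generalized pair, the $\qq$-coefficient hypothesis, and the restrictive geometric conditions on extractability via ample quasi-flips. Combining these two bounds, only finitely many ample quasi-flips in the sequence can be divisorial, so after finitely many steps every quasi-flip is small, giving termination in codimension one.
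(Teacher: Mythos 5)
The paper does not actually prove this lemma; it cites \cite[Lemma 4.26]{Mor18}, so your proposal is being measured against that argument, whose skeleton---monotonicity of generalized log discrepancies (Proposition~\ref{monotonicity}), the ACC of $1-\Lambda$ coming from the DCC of $\Lambda$, and finiteness of divisorial valuations of small log discrepancy over a klt pair---you correctly identify. One remark before the main criticism: the ``main obstacle'' you defer to a ``plan'' is not really an obstacle. For a generalized klt pair $(X_0/Z,B_0+M_0)$, the set of divisorial valuations $E$ over $X_0$ with $a_E(X_0,B_0+M_0)\le 1-\delta$ is finite: pass to a model on which the nef part descends, note that the generalized log discrepancies agree with the ordinary log discrepancies of the resulting sub-klt pair, and apply the standard finiteness of exceptional divisors with non-positive discrepancy \cite[Corollary 2.36]{KM98}. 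No appeal to ``$\qq$-coefficients'' or to ``restrictive geometric conditions on extractability'' is needed, and leaving this as an unproved plan is a gap in the write-up, though a fixable one.

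The substantive gap is that your counting only sees prime divisors carrying a \emph{positive} boundary coefficient. An ample quasi-flip in which $\phi_i$ contracts (or $\phi_i^+$ extracts) a prime divisor $E$ with ${\rm coeff}_E(B_i)=0$ is still not small, yet for such $E$ one has $a_E=1$ before the contraction and $a_E>1$ afterwards, so the relevant values leave the set $(1-\Lambda)\cap(0,1-\delta]$ and your ACC argument gives no information. Monotonicity does show that such an $E$ can never be re-extracted (extraction at a later stage would force $a_E\le 1$ there, contradicting that $a_E$ is non-decreasing), but it does not exclude infinitely many steps, each contracting a \emph{new} coefficient-zero prime divisor of $X_0$. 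Ruling this out needs a further input: for the extracted divisors one can still get a finite pool from the klt finiteness applied with threshold $1$ rather than $1-\delta$, but for the contracted ones one must bound the total number of prime divisors that a composition of quasi-flips can contract, e.g.\ by a Picard-rank or divisor-class-group bookkeeping argument. As written, your concluding sentence ``only finitely many ample quasi-flips in the sequence can be divisorial'' does not follow from what precedes it.
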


\begin{proof}
This is proved in~\cite[Lemma 4.26]{Mor18}.
\end{proof}

\begin{corollary}\label{small}
If $V$ is a minimal non-klt center of $(Y/Z,B_Y+M_Y)$ not contained in any of the flipping loci, then 
the sequence of birational transformations~\eqref{mmponV} is eventually a sequence of isomorphisms and small ample quasi-flips
with a fixed boundary divisor and a common b-nef divisor.
\end{corollary}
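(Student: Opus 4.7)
The plan is to combine three ingredients already in the paper: the klt character of the adjunction to a minimal non-klt center, the inheritance of an MMP on $Y$ by such a center through Lemma~\ref{mmplcc}, and the codimension-one termination result of Lemma~\ref{tercod1}. The minimality of $V$ is essential precisely because Lemma~\ref{tercod1} is stated for \emph{klt} pairs.

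First, I would invoke the corollary immediately preceding Lemma~\ref{l-g} to conclude that $(V/Z,B_V+M_V)$ is generalized klt. The hypothesis that $V$ meets no flipping locus lets an easy induction on $i$ show that each $\pi_i$ is an isomorphism at the generic point of the strict transform $V_{i-1}$. Lemma~\ref{mmplcc} then applies iteratively and produces the induced sequence \eqref{mmponV} of ample quasi-flips or identities for $(V/Z,B_V+M_V)$; moreover, every $M_{V_i}$ is the restriction of the common nef b-Cartier b-divisor associated to $(Y,B_Y+M_Y)$, which already yields the ``common b-nef divisor'' part of the statement.

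Next, I would apply Lemma~\ref{adjunction} with $\Lambda$ the finite set of coefficients of $B_Y$: since MMP steps on $Y$ push forward the boundary, every $B_{Y_i}$ still has coefficients in $\Lambda$, and hence every $B_{V_i}$ has coefficients in a single DCC set $\Theta=\Theta(\Lambda,\dim Y)$. Applying Lemma~\ref{tercod1} to the induced klt sequence on $V$, it follows that after finitely many indices every induced ample quasi-flip is small; since small ample quasi-flips preserve their divisorial push-forward, the boundary $B_{V_i}$ also stabilizes, which gives the ``fixed boundary divisor'' conclusion.

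The main delicate point is the repeated application of Lemma~\ref{mmplcc}: at each step $i$ one has to ensure that the strict transform of $V$ continues to avoid the next flipping locus so that the lemma applies. This is exactly what the assumption ``$V$ is not contained in any of the flipping loci'' encodes, so the obstacle is only bookkeeping and requires no new geometric input beyond what is already established.
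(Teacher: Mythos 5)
Your argument is correct and assembles exactly the ingredients the paper intends for this (unproved) corollary: generalized klt-ness of $(V/Z,B_V+M_V)$ from minimality of the center, Lemma~\ref{mmplcc} applied step by step for the induced sequence and the common b-nef divisor, Lemma~\ref{adjunction} to put the coefficients of the $B_{V_i}$ in a fixed DCC set, and Lemma~\ref{tercod1} to conclude that the induced quasi-flips are eventually small, with the DCC condition together with the monotonicity $B_{V_{i+1}}\leq (\pi_{i+1})_*B_{V_i}$ forcing the boundary to stabilize. One caution on wording: the hypothesis is that $V$ is not \emph{contained} in any flipping locus, not that it is disjoint from them (the latter would make the induced sequence consist only of isomorphisms and the statement vacuous); what the actual hypothesis gives is precisely that each step is an isomorphism at the generic point of the strict transform of $V$, which is all your induction uses, so nothing in your argument breaks.
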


\begin{lemma}\label{fromqftoflip}
Let $X\dasharrow X^+$ be a small ample quasi-flip over $W$ for generalized klt pairs $(X/Z,B+M)$ and $(X^+/Z,B^++M^+)$ with a fixed boundary divisor.  
Let $(Y/Z,B_Y+M_Y)$  be a small $\qq$-factorialization of $(X/Z,B+M)$. 
Then there exists a sequence of $(Y/Z,B_Y+M_Y)$ flips $\eta: Y\dasharrow Y^+$ over $W$, such that $(Y^+/Z,B_{Y^+}+M_{Y^+})$ is a $\qq$-factorialization of $(X^+/Z,B^++M^+)$.
In particular a small ample quasi-flip for $\qq$-factorial generalized klt pairs with a fixed boundary divisor can be factored in a sequence of flips.
\end{lemma}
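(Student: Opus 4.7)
My plan is to realize the birational map $h\colon Y \dashrightarrow Y^+$ as a sequence of flips by running a $(K_Y+B_Y+M_Y)$-MMP with scaling over $W$ and identifying its output with a small $\qq$-factorialization of $X^+$. First, I would construct $Y^+$ as a small $\qq$-factorialization $g^+\colon Y^+ \to X^+$ (a standard construction for generalized klt pairs; compare Lemma~\ref{l-qf}), and note that $h := (g^+)^{-1}\circ\pi\circ g$ is small because $g$, $\pi$, and $g^+$ all are.

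Next, I would establish the key numerical comparison: on a common log resolution $W'$ of $Y$, $Y^+$, $X$, $X^+$ (chosen so $M'$ descends), with projections $p,q,r,r^+$, the identity $p^*(K_Y+B_Y+M_Y) = q^*(K_{Y^+}+B_{Y^+}+M_{Y^+}) + E$ holds with $E\geq 0$ and $q$-exceptional. Writing $r^*(K_X+B+M) = K_{W'}+B_{W'}+M_{W'}$ and $(r^+)^*(K_{X^+}+B^++M^+) = K_{W'}+B_{W'}^+ + M_{W'}$, the difference is $B_{W'}-B_{W'}^+$, whose coefficient at any prime $F$ equals $a_F(X^+,B^++M^+)-a_F(X,B+M) \geq 0$ by Proposition~\ref{monotonicity} (using the fixed-boundary condition $B^+=\pi_*B$). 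Smallness of $h$ then gives $h_*(K_Y+B_Y+M_Y)=K_{Y^+}+B_{Y^+}+M_{Y^+}$, forcing $E$ to be $q$-exceptional. In particular $(Y^+/W, B_{Y^+}+M_{Y^+})$ is a weak log canonical model of $(Y/W, B_Y+M_Y)$.

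Finally, I would pick a general ample $A/W$ on $Y$ and, by Lemma~\ref{mmpdltmodel}, run the $(K_Y+B_Y+M_Y)$-MMP over $W$ with scaling of $A$. Termination follows from \cite{BCHM} applied after the ample perturbation (the klt pair $(Y, B_Y+M_Y+\epsilon A)$ acquires a big boundary over $W$), producing a minimal model; base-point freeness combined with the comparison above then attaches a small morphism from this minimal model onto $X^+$, realizing it as a small $\qq$-factorialization of $X^+$. Each MMP step is a flip and not a divisorial contraction because the relative Picard number is locked: $\rho(Y/W)=\rho(X/W)=\rho(X^+/W)=\rho(Y^+/W)$, the middle equality coming from smallness of $\pi$ and the outer ones from smallness of $g$ and $g^+$. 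The main obstacle I expect is justifying termination of the MMP and then invoking base-point-freeness to construct the morphism to $X^+$, both in versions adapted to the generalized klt setting. The ``in particular'' statement follows at once by taking $Y=X$ and $Y^+=X^+$ when both pairs are $\qq$-factorial.
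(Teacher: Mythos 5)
Your overall strategy coincides with the paper's: run a $(K_Y+B_Y+M_Y)$-MMP with scaling over $W$, use the fact that $Y\to W$ is birational to get termination (the paper cites \cite[Lemma 4.4]{BZ16}, which is exactly the generalized-pair version of the ample/big perturbation you sketch), and identify the resulting good minimal model over $W$ with a small $\qq$-factorialization of the ample model $X^+$ via the crepant morphism $\nu\colon Y^+\to X^+$. The paper simply takes the MMP output as the definition of $Y^+$ instead of constructing a $\qq$-factorialization of $X^+$ in advance and matching it up; that difference is cosmetic, and your discrepancy comparison is essentially Remark~\ref{r-1}.

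The one step that fails as written is the Picard-number argument you use to rule out divisorial contractions. Small birational morphisms do not preserve the relative Picard number: if $X$ is not $\qq$-factorial, a small $\qq$-factorialization $g\colon Y\to X$ has $\rho(Y/W)>\rho(X/W)$ (and similarly for $g^+$), and the middle equality $\rho(X/W)=\rho(X^+/W)$ also relies on $\qq$-factoriality, which is not assumed for $X$ and $X^+$ in this lemma. The correct way to see that no step is divisorial is through smallness of the maps rather than Picard numbers: the composite $Y\to X\dashrightarrow X^+$ contracts no divisors, the MMP output $\tilde Y$ comes with a birational morphism $\nu\colon \tilde Y\to X^+$, and $Y\dashrightarrow\tilde Y\to X^+$ agrees with $Y\to X\dashrightarrow X^+$ as a rational map; hence a prime divisor of $Y$ contracted by the MMP would be contracted by a map that contracts no divisors, a contradiction. (Once this is known, $\rho(Y/W)=\rho(Y^+/W)$ does hold, but as a consequence of $\qq$-factoriality of $Y$ and $Y^+$ together with smallness of $Y\dashrightarrow Y^+$, not of smallness of $g$, $\pi$, $g^+$ separately.) With that repair your argument is complete and matches the paper's.
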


\begin{proof}
Suppose that $\pi :X\dasharrow X^+$ is a small ample quasi-flip so that we have  projective morphisms $\phi :X\to W$ and $\phi^+:X^+\to W$ over $Z$ such that $-(K_X+B+M)$ and $K_{X^+}+B^++M^+$ are ample over $W$ and $B^+=\pi _* B$.  
By assumption $\mu :Y\to X$ is a small birational morphism, $Y$ is $\qq$-factorial and $K_Y+B_Y+M_Y=\mu ^*(K_X+B+M)$.
We now run a $K_Y+B_Y+M_Y$ minimal model program with scaling over $W$ 
which terminates by~\cite[Lemma 4.4]{BZ16}. The output of this minimal model program is a good minimal model $(Y^+,B_{Y^+}+M_{Y^+})$ for $K_Y+B_Y+M_Y$ over $W$,
it has a projective birational morphism $\nu\colon Y^+\rightarrow X^+$ such that $K_{Y^+}+B_{Y^+}+M_{Y^+}=\nu^*(K_{X^+}+B^++M^+)$.
\end{proof}

\begin{remark}\label{r-1} Note that ${\rm Ex}(Y\dasharrow Y^+)=\mu ^{-1}{\rm Ex}(X\dasharrow X^+)$.
To see this note that since $X\dasharrow X^+$ is an ample quasi-flip, then ${\rm Ex}(X\dasharrow X^+)$ coincides with the set of points on $X$ that are centers for a divisor $E$ such that $a_E(X,B+M)<a_E(X^+,B^++M^+)$. Similarly since $Y\dasharrow Y^+$ is a sequence of flips, then ${\rm Ex}(Y\dasharrow Y^+)$ coincides with the set of points on $Y$ that are centers for a divisor $E$ such that $a_E(Y,B_Y+M_Y)<a_E(Y^+,B_{Y^+}+M_{Y^+})$. The claim now follows easily since $a_E(X,B+M)=a_E(Y,B_Y+M_Y)$ and $a_E(X^+,B^++M^+)=a_E(Y^+,B_{Y^+}+M_{Y^+})$ for any divisor $E$ over $X$. 
\end{remark}
The following lemma is a version of Fujino's special termination for dlt pairs in the context of generalized pairs (see, e.g.,~\cite{Fuj07}).

\begin{lemma}\label{lemma}
With the notation of Lemma~\ref{mmplcc}. Assume that a minimal model program for the generalized $\qq$-factorial dlt pair $(Y/Z,B_Y+M_Y)$ is infinite.
Then, this minimal model program is eventually disjoint from the generalized non-klt locus of $(Y/Z,B_Y+M_Y)$ or
it induces an infinite sequence of flips for a generalized klt pair of dimension at most $n-1$.
\end{lemma}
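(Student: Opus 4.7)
My plan is to prove, by induction on $d \geq 0$, the following strengthening: either the minimal model program induces an infinite sequence of flips for a generalized klt pair of dimension at most $n-1$, or after finitely many steps every flipping locus is disjoint from each non-klt center of $(Y, B_Y+M_Y)$ of dimension at most $d$. Since non-klt centers of a generalized dlt pair have dimension at most $n-1$, taking $d = n - 1$ recovers the lemma.

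As a preliminary reduction, Proposition~\ref{monotonicity} combined with standard DCC arguments on generalized log discrepancies shows that after dropping finitely many initial flips, the set of non-klt centers of $(Y_j, B_{Y_j}+M_{Y_j})$ (identified via strict transforms) is constant along the MMP, and no non-klt center is ever contained in a flipping locus. Lemma~\ref{mmplcc} then applies to every non-klt center $W$ and produces an induced sequence of quasi-flips or identities on the generalized dlt pair $(W, B_W+M_W)$, whose boundary coefficients lie in a DCC set by Lemma~\ref{adjunction}. The base case $d = 0$ is immediate, since a $0$-dimensional non-klt center is a point and the preliminary reduction already ensures the flipping loci avoid it.

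For the inductive step at $d \geq 1$, fix a non-klt center $W$ of dimension $d$. If the induced sequence on $W$ is eventually a sequence of identities of pairs, Lemma~\ref{l-g} directly gives that $\phi_j$ is an isomorphism in a neighborhood of $W_j$ for $j \gg 0$, so the flipping loci eventually avoid $W$. Otherwise infinitely many induced maps are non-trivial; by Lemma~\ref{tercod1} combined with an extension of Corollary~\ref{small} to non-minimal centers, after finitely many steps all non-trivial induced maps are small ample quasi-flips with fixed boundary and common b-nef divisor.

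To conclude in this non-trivial case, I will invoke the inductive hypothesis on the $Y$-MMP: the flipping loci $\operatorname{Ex}(\phi_j)$ eventually avoid every non-klt center of $(Y, B_Y+M_Y)$ of dimension at most $d-1$. Under the generalized dlt adjunction of Lemma~\ref{adjunction}, these are precisely the non-klt centers of $(W, B_W+M_W)$, so the flipping loci of the induced small ample quasi-flips on $W$ eventually lie entirely inside the generalized klt locus of $(W, B_W+M_W)$. Restricting each induced quasi-flip to the open subset of its base whose preimage lies in this klt locus yields a small ample quasi-flip of generalized klt pairs of dimension $d \leq n-1$ with fixed boundary, which by Lemma~\ref{fromqftoflip} factors into a non-empty sequence of generalized klt flips. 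Collecting across all infinitely many non-trivial induced steps produces the required infinite sequence of generalized klt flips in dimension at most $n-1$. The main technical obstacle is extending the fixed-boundary conclusion of Corollary~\ref{small} from minimal to arbitrary non-klt centers, which requires careful bookkeeping of the adjunction formula along the MMP and relies on the DCC input from Lemma~\ref{adjunction}.
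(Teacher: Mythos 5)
Your argument is correct and is essentially the paper's: both run a Fujino-type special termination through Lemma~\ref{mmplcc}, the DCC adjunction of Lemma~\ref{adjunction}, Lemma~\ref{tercod1}/Corollary~\ref{small}, Lemma~\ref{l-g} and Lemma~\ref{fromqftoflip}, the only organizational difference being that the paper replaces your induction on the dimension $d$ of the non-klt centers by selecting a log canonical center that is \emph{minimal among those meeting the flipping loci infinitely often}, which packages the same induction into a single choice. The ``extension of Corollary~\ref{small} to non-minimal centers'' that you flag as the main obstacle is precisely the point the paper also elides (its chosen center need not be a minimal non-klt center either), and it is resolved exactly as you indicate: one first arranges --- via minimality, respectively via your inductive hypothesis --- that the induced quasi-flips take place inside the klt locus of the center, where the klt hypotheses of Lemma~\ref{tercod1} apply.
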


\begin{proof}
Assume that the flipping loci of the minimal model program for $(Y/Z,B_Y+M_Y)$ intersect the generalized non-klt locus infinitely many times. Then there exists a generalized log canonical center which is not contained in any exceptional locus of the minimal model program and intersects the flipping loci infinitely many times. Let $V$ be a generalized log canonical center which is minimal with the above condition. By the minimality assumption, eventually the flipping loci only intersect the klt locus of $(V/Z,B_V+M_V)$. Since the generalized pair $(V/Z,B_V+M_V)$ is generalized dlt by Lemma~\ref{adjunction},  then by Lemma \ref{l-qf} it has a generalized $\qq$-factorialization $(V'/Z, B_{V'}+M_{V'})$ and $h:V'\to V$. By Lemma~\ref{mmplcc}, Corollary~\ref{small}, and Lemma~\ref{fromqftoflip}, we obtain an induced infinite sequence of flips for the generalized klt pair $(V'/Z,B_{V'}+M_{V'})$.
\end{proof}

\begin{proof}[Proof of Proposition~\ref{dltmodification}]
Let $(X'/Z,B'+M')$ be a log resolution (given by a sequence of blow ups along smooth centers) of the generalized pair $(X/Z,B+M)$
and denote by $\pi \colon X'\rightarrow X$ the induced birational morphism.
Consider the generalized pair 
\[
(X'/Z, \overline{B} + M'),
\]
where $\overline{B}=\pi ^{-1}_*B+{\rm Ex}(\pi)$ is the effective divisor obtained from $B'$ by 
setting the coefficients of all exceptional divisors over $X$ equal to one.
We claim that the diminished base locus of $K_{X'}+\overline{B}+M'$ over $X$
contains all exceptional divisors over $X$ whose log discrepancy with respect to $(X/Z,B+M)$ is positive.
Indeed, we may write 
\[
K_{X'}+\overline{B}+M' = \pi^*(K_X+B+M)+E_1 - E_2, 
\]
where $E_1$ (resp. $E_2$) is an effective divisor which is supported on the union of all exceptional divisors over $X$ whose
log discrepancy with respect to $(X/Z,B+M)$ is positive (resp. negative).
We can pick an ample divisor $A$ on $X$ and an effective divisor $F$, exceptional over $X$ and supported on ${\rm Ex}(\pi)$ 
so that $\pi^*A-\lambda F$ is ample on $X'$ for $\lambda$ small enough. 
For $\epsilon>0$ arbitrary small the diminished base locus over $X$ of $K_{X'}+\overline{B}+M'$ equals the stable base locus over $X$ of 
\begin{equation}\label{diminished-base-locus}
\pi^*(K_X+B+M+\epsilon A) + E_1- \epsilon \lambda F - E_2. 
\end{equation}
Let $E$ be an effective divisor which is $\qq$-linearly equivalent over $X$ to the $\Q$-divisor from equation~\eqref{diminished-base-locus}.
We have that 
\[
E-E_1+\epsilon\lambda F +E_2 \sim_{X,\qq} 0
\]
and the push-forward to $X$ of the above divisor is effective.
Then by the negativity lemma we have that
\[
E+E_2 \geq E_1  - \epsilon \lambda F.
\]
For $\epsilon$ small enough, any prime divisor contained in the support of $E_1$
appears with positive coefficient in $E_1 -\epsilon\lambda F$,
moreover since $E_2$ and $E_1$ have no common prime components, 
we conclude that for $\epsilon$ small enough the support of $E$ must contain the support of $E_1$, concluding the claim.
By~\cite[4.4]{BZ16}, we may run a minimal model program with scaling of an ample divisor for $(X'/Z,\overline{B}+M')$ over $X$. 
This minimal model program eventually contracts all divisors in the diminished stable base locus and hence all exceptional divisors over $X$
whose log discrepancy with respect to $(X/Z,B+M)$ is positive. Therefore we may assume that $E_1=0$ and thus eventually every flip is $-E_2$ negative and in particular intersects the support of $\lfloor \overline B\rfloor$.
Thus, this MMP terminates by Lemma~\ref{lemma} and the lower dimensional termination of generalized flips.
The obtained minimal model over $X$ is a $\qq$-factorial generalized dlt modification of $(X/Z,B+M)$.
\end{proof}

\begin{proof}[Proof of Theorem~\ref{termination}]
Assume termination of flips for generalized klt pairs of dimension at most $n-1$.

First we prove the case in which $(X/Z,B+M)$ is $\qq$-factorial generalized dlt.
Let $(X/Z,B+M)$ be a $\qq$-factorial generalized dlt pair of dimension $n$ admitting a weak Zariski decomposition.
We proceed by contradiction. Let 
\[
 \xymatrix@C=2em{
(X/Z,B+M)\ar@{-->}[r]^-{\pi_1} & (X_1/Z,B_1+M_1)\ar@{-->}[r]^-{\pi_2} & (X_2/Z,B_2+M_2) \ar@{-->}[r]^-{\pi_3} & 
\cdots \ar@{-->}[r]^-{\pi_i} & (X_i/Z,B_i+M_i)\ar@{-->}[r]^-{\pi_{i+1}} & \cdots
 }
\]
be an infinite minimal model program for $(X/Z,B+M)$.
We denote by $P_{i}$ and $N_{i}$ the push-forward 
of the nef part and effective part of the weak Zariski decomposition induced by Lemma~\ref{WZDpreserved}
on each generalized pair $(X_i/Z,B_i+M_i)$.

Truncating the above infinite minimal model program, 
we may assume that all the steps are flips.
We claim that there exists a non-negative rational number $\lambda$, such that 
the above sequence of generalized flips is a sequence of generalized flips for the generalized pair 
$(X/Z,B+M+\lambda(P+N))$ which is eventually disjoint from every generalized non log canonical center
of $(X/Z,B+M+\lambda(P+N))$ and there exists a generalized log canonical center of $(X/Z,B+M+\lambda(P+N))$
which is intersected by infinitely many generalized flips, but is never contained in the flipping loci.
Indeed, let 
\[
\lambda_0 :={\rm glct}((X/Z,B+M) \mid N+P).
\]
If all generalized flips are eventually disjoint from the generalized log canonical centers
of $(X_i/Z,B_i+M_i+\lambda_0 (N_i+P_i))$, then we define $\lambda_1$ to be the log canonical threshold
of $(X_i/Z,B_i+M_i)$ with respect to $N_i+P_i$ on the complement of such generalized log canonical centers.
Proceeding inductively, we create an increasing sequence of generalized log canonical thresholds, 
which must stop by the ACC for glct's~\cite[Theorem 1.5]{BZ16}.
Hence, eventually we find $\lambda$ as required.
Observe that by Lemma~\ref{finiteness} we may assume that $\lambda$ is always finite. If this were not the case, then there is an open subset $U_i\subset X_i$ containing all the flipping loci such that $(K_{X_i}+B_i+M_i)|_{U_i}\sim _\Q \lambda (N_i+P_i)|_{U_i}$ where  $N_i|_{U_i}=0$, $P_i|_{U_i}$ is nef over $W_i$ and $X_i\to W_i$ is the flipping contraction. But then the flipping contraction $X_i\to W_i$ is $K_{X_i}+B_i+M_i$ trivial, which is impossible.

Up to reindexing our sequence we have an infinite sequence of generalized flips 
\[
 \xymatrix@C=1em{
(X/Z,B+M+\lambda(P+N))\ar@{-->}[rr]^-{\pi_1}\ar[rd] & & (X_1/Z,B_1+M_1+\lambda(P_1+N_1))\ar@{-->}[r]^-{\pi_2}\ar[ld] \ar[rd]&  &\cdots \\ 
& W & & W_1 & \cdots 
 }
\]
for the generalized pair $(X/Z,B+M+\lambda(P+N))$ such that all generalized flips are disjoint from the generalized
non-log canonical centers of $(X/Z,B+M+\lambda(P+N))$ and there exists a generalized log canonical center 
of $(X/Z,B+M+\lambda(P+N))$ which intersects non-trivially infinitely many flipping loci.

If $\lambda=0$, then we obtain a contradiction by Lemma~\ref{lemma}.
Assume that $\lambda>0$ and let 
$(Y/Z,B_Y+M_Y+\lambda(P_Y+N_Y))$ be a $\qq$-factorial dlt modification of $(X/Z,B+M+\lambda(P+N))$ given by Lemma~\ref{dltmodification}.
Here, $M_Y$ (resp. $P_Y$) are the trace of the corresponding b-divisors, 
$\lambda N_Y$ is the strict transform of $\lambda N$, 
and $B_Y$ is the strict transform of $B$ plus the reduced exceptional divisor.
Note that the divisor $K_Y+B_Y+M_Y+\lambda(P_Y+N_Y)$ is anti-nef over $W$.
This statement follows from the proof of Lemma~\ref{dltmodification}.
We denote the morphism of this $\qq$-factorial dlt modification by $\rho\colon Y \rightarrow X$.
We claim that $Y$ is of Fano type over a neighborhood $U$ of the image on $W$ of the flipping locus. 
Indeed, observe that for $0<\lambda' < \lambda$ the generalized pair 
$(X/Z,B+M+\lambda'(P+N))$ is generalized klt on a neighborhood of the flipping locus of $X\rightarrow W$
and anti-ample over $W$. 
We conclude by~\cite[2.10]{Bir17} that $Y$ is of Fano type over an open set $U$ on $W$ which contains the image of the flipping locus.
In particular, we can run a minimal model program for any divisor on $Y$ over $U$ which will terminate with a minimal model.
Now, we run a minimal model program for 
$(Y/Z,B_Y+M_Y+\lambda(P_Y+N_Y))$ over such neighborhood
which terminates with a minimal model.
Observe that the generalized non-log canonical locus of $(Y/Z,B_Y+M_Y+\lambda(P_Y+N_Y))$
is disjoint from the diminished base locus of $K_Y+B_Y+M_Y+\lambda(P_Y+N_Y)$ over $W$.
We conclude that such minimal model program is also a minimal model program 
for $(Y/Z,B_Y+M_Y+\lambda(P_Y+N_Y))$ over $W$ which terminates with a good minimal model
$(Y_1/Z,B_{Y_1}+M_{Y_1}+\lambda(P_{Y_1}+N_{Y_1}))$ over $W$,
and its ample model is $(X_1/Z,B_1+M_1+\lambda(P_1+N_1))$.
Proceeding inductively, we obtain an infinite sequence of generalized dlt flips for
$\qq$-factorial generalized pairs
\[
 \xymatrix@C=1em{
(Y/Z,B_Y+M_Y+\lambda(P_Y+N_Y))\ar@{-->}[rr]^-{\pi_{Y,1}}\ar[d]^-{\rho} & & (Y_1/Z,B_{Y_1}+M_{Y_1}+\lambda(P_{Y_1}+N_{Y_1}))\ar@{-->}[r]^-{\pi_{Y,2}}\ar[d]^-{\rho_1} \ar[r]&\cdots \\ 
(X/Z,B+M+\lambda(P+N))\ar@{-->}[rr]^-{\pi_1}\ar[rd] & & (X_1/Z,B_1+M_1+\lambda(P_1+N_1))\ar@{-->}[r]^-{\pi_2}\ar[ld] \ar[rd]&  \cdots \\ 
& W & & W_1 & \cdots 
 }
\]
Moreover, every such generalized flip is disjoint from the prime divisors which appear with coefficient larger than one in $B_Y+\lambda N_Y$. 
For simplicity, we will write $B'_Y := (B_Y+\lambda N_Y)^{\leq 1} = (B_Y+\lambda N_Y) \wedge {\rm Supp}(B_Y+\lambda N_Y)$
and use the analogous notation for all $Y_i$. 
We obtain an infinite sequence of generalized flips for the $\qq$-factorial dlt pairs
\[
 \xymatrix@C=1em{
(Y/Z,B'_Y+M_Y+\lambda N_Y)\ar@{-->}[rr]^-{\pi_{Y,1}} & & (Y_1/Z,B'_{Y_1}+M_{Y_1}+\lambda N_{Y_1})\ar@{-->}[r]^-{\pi_{Y,2}} \ar[r]&  \cdots \\ 
 }
\]
Since $(X/Z,B+M+\lambda(P+N))$ has a generalized log canonical center which is intersected non-trivially by
infinitely many flipping loci, we conclude that $(Y/Z,B'_Y+M_Y+\lambda N_Y)$ has a generalized log canonical
center which is intersected non-trivially by infinitely many flipping loci.
This is impossible by Lemma~\ref{lemma}.

Now, we prove the general case. 
 Let 
\[
 \xymatrix@C=2em{
(X/Z,B+M)\ar@{-->}[r]^-{\pi_1} & (X_1/Z,B_1+M_1)\ar@{-->}[r]^-{\pi_2} & (X_2/Z,B_2+M_2) \ar@{-->}[r]^-{\pi_3} & 
\cdots \ar@{-->}[r]^-{\pi_i} & (X_i/Z,B_i+M_i)\ar@{-->}[r]^-{\pi_{i+1}} & \cdots
 }
\]
be an infinite minimal model program for the generalized log canonical pair $(X/Z,B+M)$.
By Lemma~\ref{dltmodification} we can take a dlt modification
$(Y/Z,B_Y+M_Y)$ of $(X/Z,B+M)$.
By Lemma~\ref{mmpdltmodel}, we can run a minimal model program for
the $\qq$-factorial generalized dlt pair $(Y/Z,B_Y+M_Y)$ with scaling of a general ample divisor over $W$.
Observe that $(Y/Z,B_Y+M_Y)$ is big over $W$, since the morphism is birational,
hence it has a weak Zariski decomposition over $Z$.
Thus, all the conditions of the $\qq$-factorial dlt case hold, 
and hence this minimal model program terminates with a minimal model $(Y_1/Z,B_{Y_1}+M_{Y_1})$.
Proceeding analogously with the other steps of the minimal model program,
we obtain an infinite minimal model program for $\qq$-factorial generalized dlt pairs, 
\[
 \xymatrix@C=2em{
(Y/Z,B_Y+M_Y)\ar@{-->}[r]^-{\pi_1} & (Y_1/Z,B_{Y_1}+M_{Y_1})\ar@{-->}[r]^-{\pi_2} & \cdots
 }
\]
We claim that $(Y/Z,B_Y+M_Y)$ has a weak Zariski decomposition over $Z$.
Indeed, $(X/Z,B+M)$ has a weak Zariski decomposition over $Z$, so there exists a projective birational morphism
$f\colon X'\rightarrow X$, and a numerical equivalence
\[
f^*(K_X+B+M)\equiv_Z P'+N',
\]
where $P'$ is a $\qq$-Cartier divisor which is nef over $Z$, and $N'$ is an effective $\qq$-Cartier divisor.
Without loss of generality, we may assume that $X'$ dominates $Y$ with a morphism $f_Y\colon X'\rightarrow Y$.
Since $\rho^*(K_X+B+M)=K_Y+B_Y+M_Y$, we conclude that
\[
f_Y^*(K_Y+B_Y+M_Y)\equiv_Z P'+N',
\]
so $(Y/Z,B_Y+M_Y)$ has a weak Zariski decomposition as well.
Hence, all the conditions of the $\qq$-factorial dlt case hold.
Thus, this minimal model program must terminate by the $\qq$-factorial dlt case again.
\end{proof}

\begin{proof}[Proof of Theorem~\ref{genwzd}]
We proceed by induction on the dimension. 
Let $(X/Z,B+M)$ be a generalized pair. Throughout this proof we will work over $Z$. Assume that $K_X+B+M$ is a $\qq$-Cartier pseudo-effective (over $Z$) divisor. Passing to a dlt modification, we may assume that $(X,B+M)$ is $\qq$-factorial generalized dlt and in particular $(X,0)$ is klt.
If $K_X+B$ is pseudo-effective, then by assumption there exists a birational morphism $f\colon X'\rightarrow X$, such that $f^*(K_X+B)=P'+N'$ where $P'$ is nef and $N'$ is effective, 
and $f^*M=M'+E$, where $M'$ is nef and $E$ is effective. Thus, we may write
\[
f^*(K_X+B+M) = (P'+M') + (N'+E).
\]
Therefore, we may assume that $K_X+B$ is not pseudo-effective and $K_X+B+M$ is a $\qq$-Cartier pseudo-effective divisor.
We will follow the arguments of~\cite{Gong15}.
Notice that the pseudo-effective threshold $\lambda$ for $K_X+B$ with respect to $M$ is rational.
This follows from the proof of~\cite[Proposition 8.7]{DHP13} where~\cite[Conjecture 8.2]{DHP13} is replaced by~\cite[Theorem 1.6]{BZ16}.
Replacing $M$ with $\lambda M$ we may assume that $K_X+B+(1-\epsilon)M$ is not pseudo-effective for $0< \epsilon\ll 1$.
By the proof of~\cite[Proposition 8.7]{DHP13} (see~\cite[Lemma 3.1]{Gong15} and~\cite[\S 4]{BZ16}) there exists a birational contraction 
$\phi \colon X\dashrightarrow X_0$ and a projective morphism $f_0 \colon X_0 \rightarrow Z_0$ where $(X_0,B_0+M_0)$ is generalized log canonical, 
$\dim(X_0)>\dim(Z_0),$ $ \rho(X_0/Z_0)=1$, $K_{X_0}+B_0+M_0 \equiv _{Z_0} 0,$ and $M_0$ is ample over $Z_0$.
Passing to a higher model we may assume that $\phi$ and $\phi_0 = f_0 \circ \phi$ are morphisms.
We claim that $K_X+B+M$ admits a a weak Zariski decomposition over $Z_0$.
Note that the numerical Kodaira dimension of the restriction of $K_X+B+M$ to the general fiber $F$ of $\phi_0$ is zero.
To see this notice that as $K_X+B+M$ is pseudo-effective, so is  $(K_X+B+M)|_F$. On the other hand it is easy to see that $\kappa _\sigma ((K_X+B+M)|_F)\leq \kappa _\sigma ((K_{X_0}+B_0+M_0)|_{F_0})=0$. Here $F_0=\phi (F)$ and $\phi _F=\phi |_F$.
By~\cite{Nak04}, we know that 
\begin{equation}\label{num}
(K_X+B+M)|_F \equiv N_\sigma( (K_X+B+M)|_F) \geq 0, 
\end{equation}
where $N_\sigma$ is defined as in~\cite[Chapter 3, 1.12]{Nak04}. 
Note that $$(K_X+B+M)|_F -\phi _F^*((K_{X_0}+B_0+M_0)|_{F_0})- N_\sigma( (K_X+B+M)|_F)\equiv _{F_0}0$$ is $\phi _F$-exceptional and so
by the negativity lemma, $$(K_X+B+M)|_F=\phi _F^*(K_{X_0}+B_0+M_0)|_{F_0}+N_\sigma( (K_X+B+M)|_F),$$ and in particular $N_\sigma( (K_X+B+M)|_F)$ 
is an effective $\qq$-divisor. 
If $\dim(Z_0)=0$, then the above equation gives us the required weak Zariski decomposition. Otherwise, we may assume that $\dim(Z_0)>0$ and that $K_X+B+M \sim_{\qq ,Z_0} N$ for some effective $\qq$-divisor $N$.
This proves the claim.

We may now run a minimal model program with scaling of a general ample divisor over $Z_0$ for the $\qq$-factorial generalized dlt pair $(X,B+M)$ as in~\cite[\S 4]{BZ16}.
By Theorem~\ref{termination} this minimal model program terminates with a minimal model $(X_1,B_1+M_1)$ over $Z_0$. Since $(X,(1-\delta)B+M)$ is generalized klt for $\delta >0$ and every step of the $K_X+B+M$ MMP is a step of the $K_X+(1-\delta)B+M$ for $0<\delta \ll 1$, it follows that $(X_1,(1-\delta)B_1+M_1)$   is generalized klt for $0<\delta \ll 1$
and hence $(X_1,0)$ is klt. It also follows that $K_{X_1}+B_1+(1-\epsilon)M_1$ is not pseudo-effective over $Z_0$ for any $\epsilon >0$.

We claim that for $\epsilon>0$ small enough, we may run a minimal model program for $K_{X_1}+B_1+(1-\epsilon)M_1$ with scaling of an ample divisor over $Z_0$ such that all steps of this MMP are $(K_{X_1}+B_1+M_1)$-trivial. Pick $r\in \mathbb N$ such that $r(K_{X_1}+B_1+M_1)$ is Cartier and choose a rational number $0<\epsilon<\frac 1{2r\dim(X)} $. We will prove that the first step of this minimal model program is a flop and all the relevant conditions are preserved. Indeed, observe that such a step of the minimal model program must be $(K_{X_1}+B_1)$-negative, hence by~\cite{Kaw91} we have that 
\[
0 < -(K_{X_1}+B_1) \cdot C \leq 2\dim(X)
\]
for some curve $C$ spanning the corresponding extremal ray.
If $(K_{X_1}+B_1+M_1)\cdot C >0$, then $(K_{X_1}+B_1+M_1)\cdot C \geq 1/r$ by the assumption on the Cartier index, 
so we deduce that $(K_{X_1}+B_1+(1-\epsilon)M_1)\cdot C>0$, leading to a contradiction.
Since $K_{X_1}+B_1+M_1$ is nef, we deduce that the above flip must be trivial with respect to this generalized pair. 
Finally, observe that the nefness and the Cartier index of $K_{X_1}+B_1+M_1$ are preserved in this minimal model program which terminates with a Mori fiber space by~\cite{BCHM}.
Therefore, we obtain a $(K_{X_1}+B_1+M_1)$-trivial birational contraction $\pi \colon X_1 \dashrightarrow X_2$, and a $(K_{X_1}+B_1+M_1)$-trivial fiber space $\phi_2 \colon X_2\rightarrow Z_1$ over $Z_0$
which is a Mori-fiber space for $(X_2, B_2+(1-\epsilon)M_2)$. In particular $M_2$ is ample over $Z_1$ and $K_{X_2}+B_2+M_2 \sim _{\qq,Z_1}0$.

 Since $X\dasharrow X_2$ is $K_X+B+M$ non positive, it suffices now to prove that $K_{X_2}+B_2+M_2$ has a weak Zariski decomposition.

By~\cite[Theorem 1.4]{Fil18}, there exists a generalized log canonical pair $(Z_1, B_{Z_1}+M_{Z_1})$ such that
\[
\phi_2^*(K_{Z_1}+B_{Z_1}+M_{Z_1})= K_{X_2}+B_2+M_2.
\]
By induction on the dimension, we may assume that $(Z_1,B_{Z_1}+M_{Z_1})$ has a weak Zariski decomposition	say $h\colon Z_2\rightarrow Z_1$
with
\[
h^*( K_{Z_1}+B_{Z_1}+M_{Z_1})  \equiv P_{Z_2}+N_{Z_2},
\]
where $P_{Z_2}$ is nef and $N_{Z_2}$ is effective. 
Let $\nu\colon X_3 \rightarrow X_2$ be the normalization of the main component of $X_2\times_{Z_1} Z_2$
and write $K_{X_3}+B_3+M_3=\nu^*(K_{X_2}+B_2+M_2)$ where $(X_3,B_3+M_3)$ is the corresponding generalized pair.
Let $\phi _3: X_3\to Z_2$ be the induced morphism.
We have 
\[
K_{X_3}+B_3+M_3 = \nu^*\phi_2^*( K_{Z_1}+B_{Z_1}+M_{Z_1}) \equiv \phi_3^*(P_{Z_2}+N_{Z_2}) = \phi_3^*P_{Z_2}+\phi_3^*N_{Z_2}
\]
which is the desired weak Zariski decomposition.
\end{proof}

\begin{proof}[Proof of Corollary~\ref{4-fold-generalized-termination}]
It is known that every pseudo-effective log canonical $4$-fold has a minimal model (see, e.g.~\cite{Shok09}),
hence every pseudo-effective log canonical $4$-fold has a weak Zariski decomposition.
Moroever, the termination of generalized $3$-fold flips is proved in~\cite[\S 4]{Mor18}.
Hence, by Theorem~\ref{genwzd} we conclude that every pseudo-effective generalized log canonical $4$-fold
has a weak Zariski decomposition.
Thus, by Theorem~\ref{termination} we conclude that any minimal model program for a pseudo-effective
generalized log canonical $4$-fold $(X/Z,B+M)$ terminates.
\end{proof}

\begin{bibdiv}
\begin{biblist}

\bib{AH12}{article}{
AUTHOR = {Alexeev, Valery},
AUTHOR={Hacon, Christopher D.},
     TITLE = {Non-rational centers of log canonical singularities},
   JOURNAL = {J. Algebra},
  FJOURNAL = {Journal of Algebra},
    VOLUME = {369},
      YEAR = {2012},
     PAGES = {1--15},
      ISSN = {0021-8693},
  MRNUMBER = {2959783},
}
 
\bib{Bir07}{article}{
   author={Birkar, Caucher},
   title={Ascending chain condition for log canonical thresholds and
   termination of flips},
   journal={Duke Math. J.},
   volume={136},
   date={2007},
   number={1},
   pages={173--180},
   issn={0012-7094},
   review={\MR{2271298}},
}

\bib{Bir10}{article}{
   author={Birkar, Caucher},
   title={On existence of log minimal models},
   journal={Compos. Math.},
   volume={146},
   date={2010},
   number={4},
   pages={919--928},
   issn={0010-437X},
   review={\MR{2660678}},
   doi={10.1112/S0010437X09004564},
}

\bib{Bir11}{article}{
   author={Birkar, Caucher},
   title={On existence of log minimal models II},
   journal={J. Reine Angew. Math.},
   volume={658},
   date={2011},
   pages={99--113},
   issn={0075-4102},
   review={\MR{2831514}},
   doi={10.1515/CRELLE.2011.062},
}

\bib{Bir12a}{article}{
   author={Birkar, Caucher},
   title={On existence of log minimal models and weak Zariski
   decompositions},
   journal={Math. Ann.},
   volume={354},
   date={2012},
   number={2},
   pages={787--799},
   issn={0025-5831},
   review={\MR{2965261}},
   doi={10.1007/s00208-011-0756-y},
}

\bib{Bir12b}{article}{
   author={Birkar, Caucher},
   title={Existence of log canonical flips and a special LMMP},
   journal={Publ. Math. Inst. Hautes \'Etudes Sci.},
   volume={115},
   date={2012},
   pages={325--368},
   issn={0073-8301},
   review={\MR{2929730}},
   doi={10.1007/s10240-012-0039-5},
}

\bib{Bir17}{misc}{
  author = {Birkar, Caucher},
  title={Anti-pluricanonical systems on Fano varieties},
  year = {2017},
  note = {https://arxiv.org/abs/1603.05765},
}

\bib{BH14}{article}{
   author={Birkar, Caucher},
   author={Hu, Zhengyu},
   title={Polarized pairs, log minimal models, and Zariski decompositions},
   journal={Nagoya Math. J.},
   volume={215},
   date={2014},
   pages={203--224},
   issn={0027-7630},
   review={\MR{3263528}},
   doi={10.1215/00277630-2781096},
}

\bib{BCHM}{article}{
   author={Birkar, Caucher},
   author={Cascini, Paolo},
   author={Hacon, Christopher D.},
   author={McKernan, James},
   title={Existence of minimal models for varieties of log general type},
   journal={J. Amer. Math. Soc.},
   volume={23},
   date={2010},
   number={2},
   pages={405--468},
   issn={0894-0347},
   review={\MR{2601039}},
}

\bib{BZ16}{article}{
   author={Birkar, Caucher},
   author={Zhang, De-Qi},
   title={Effectivity of Iitaka fibrations and pluricanonical systems of
   polarized pairs},
   journal={Publ. Math. Inst. Hautes \'Etudes Sci.},
   volume={123},
   date={2016},
   pages={283--331},
   issn={0073-8301},
   review={\MR{3502099}},
}

\bib{Cor07}{collection}{
   title={Flips for 3-folds and 4-folds},
   series={Oxford Lecture Series in Mathematics and its Applications},
   volume={35},
   editor={Corti, Alessio},
   publisher={Oxford University Press, Oxford},
   date={2007},
   pages={x+189},
   isbn={978-0-19-857061-5},
   review={\MR{2352762}},
}

\bib{DHP13}{article}{
   author={Demailly, Jean-Pierre},
   author={Hacon, Christopher D.},
   author={P\u aun, Mihai},
   title={Extension theorems, non-vanishing and the existence of good
   minimal models},
   journal={Acta Math.},
   volume={210},
   date={2013},
   number={2},
   pages={203--259},
   issn={0001-5962},
   review={\MR{3070567}},
   doi={10.1007/s11511-013-0094-x},
}

\bib{Fil18}{misc}{
  author = {Filipazzi, Stefano},
  title={On a generalized canonical bundle formula and generalized adjunction},
  year = {2018},
  note = {https://arxiv.org/abs/1807.04847},
}

\bib{Fuj79}{article}{
   author={Fujita, Takao},
   title={On Zariski problem},
   journal={Proc. Japan Acad. Ser. A Math. Sci.},
   volume={55},
   date={1979},
   number={3},
   pages={106--110},
   issn={0386-2194},
   review={\MR{531454}},
}

\bib{Fuj86}{article}{
   author={Fujita, Takao},
   title={Zariski decomposition and canonical rings of elliptic threefolds},
   journal={J. Math. Soc. Japan},
   volume={38},
   date={1986},
   number={1},
   pages={19--37},
   issn={0025-5645},
   review={\MR{816221}},
   doi={10.2969/jmsj/03810019},
}
	
\bib{Fuj07}{article}{
   author={Fujino, Osamu},
   title={Special termination and reduction to pl flips},
   conference={
      title={Flips for 3-folds and 4-folds},
   },
   book={
      series={Oxford Lecture Ser. Math. Appl.},
      volume={35},
      publisher={Oxford Univ. Press, Oxford},
   },
   date={2007},
   pages={63--75},
   review={\MR{2359342}},
}

\bib{Gong15}{article}{
   author={Gongyo, Yoshinori},
   title={Remarks on the non-vanishing conjecture},
   conference={
      title={Algebraic geometry in east Asia---Taipei 2011},
   },
   book={
      series={Adv. Stud. Pure Math.},
      volume={65},
      publisher={Math. Soc. Japan, Tokyo},
   },
   date={2015},
   pages={107--116},
   review={\MR{3380777}},
}
	
\bib{HMX14}{article}{
   author={Hacon, Christopher D.},
   author={McKernan, James},
   author={Xu, Chenyang},
   title={ACC for log canonical thresholds},
   journal={Ann. of Math. (2)},
   volume={180},
   date={2014},
   number={2},
   pages={523--571},
   issn={0003-486X},
   review={\MR{3224718}},
}

\bib{HK10}{book}{
   author={Hacon, Christopher D.},
   author={Kov\'acs, S\'andor J.},
   title={Classification of higher dimensional algebraic varieties},
   series={Oberwolfach Seminars},
   volume={41},
   publisher={Birkh\"auser Verlag, Basel},
   date={2010},
   pages={x+208},
   isbn={978-3-0346-0289-1},
   review={\MR{2675555}},
}

\bib{HZ18}{article}{
   author={Han, Jingjun},
   author={Li, Zhan},
 title={Weak Zariski decompositions and log minimal
models for generalized polarized pairs},
  journal={Preprint},
 date={2018},
}

\bib{Kaw91}{article}{
   author={Kawamata, Yujiro},
   title={On the length of an extremal rational curve},
   journal={Invent. Math.},
   volume={105},
   date={1991},
   number={3},
   pages={609--611},
   issn={0020-9910},
   review={\MR{1117153}},
   doi={10.1007/BF01232281},
}

\bib{KK10}{article}{
   author={Koll\'ar, J\'anos},
   author={Kov\'acs, S\'andor J.},
   title={Log canonical singularities are Du Bois},
   journal={J. Amer. Math. Soc.},
   volume={23},
   date={2010},
   number={3},
   pages={791--813},
   issn={0894-0347},
   review={\MR{2629988}},
}

\bib{KM98}{book}{
   author={Koll\'ar, J\'anos},
   author={Mori, Shigefumi},
   title={Birational geometry of algebraic varieties},
   series={Cambridge Tracts in Mathematics},
   volume={134},
   note={With the collaboration of C. H. Clemens and A. Corti;
   Translated from the 1998 Japanese original},
   publisher={Cambridge University Press, Cambridge},
   date={1998},
   pages={viii+254},
   isbn={0-521-63277-3},
   review={\MR{1658959}},
   doi={10.1017/CBO9780511662560},
}

\bib{Les14}{article}{
   author={Lesieutre, John},
   title={The diminished base locus is not always closed},
   journal={Compos. Math.},
   volume={150},
   date={2014},
   number={10},
   pages={1729--1741},
   issn={0010-437X},
   review={\MR{3269465}},
   doi={10.1112/S0010437X14007544},
}

\bib{Mor18}{misc}{
  author = {Moraga, Joaqu\'in},
  title={Termination of pseudo-effective 4-fold flips},
  year = {2018},
  note = {https://arxiv.org/abs/1802.10202},
}

\bib{Nak04}{book}{
   author={Nakayama, Noboru},
   title={Zariski-decomposition and abundance},
   series={MSJ Memoirs},
   volume={14},
   publisher={Mathematical Society of Japan, Tokyo},
   date={2004},
   pages={xiv+277},
   isbn={4-931469-31-0},
   review={\MR{2104208}},
}

\bib{Pro04}{article}{
   author={Prokhorov, Yu. G.},
   title={On the Zariski decomposition problem},
   language={Russian, with Russian summary},
   journal={Tr. Mat. Inst. Steklova},
   volume={240},
   date={2003},
   number={Biratsion. Geom. Line\u\i n. Sist. Konechno Porozhdennye Algebry},
   pages={43--72},
   issn={0371-9685},
   translation={
      journal={Proc. Steklov Inst. Math.},
      date={2003},
      number={1(240)},
      pages={37--65},
      issn={0081-5438},
   },
   review={\MR{1993748}},
}
	
\bib{Shok04}{article}{
   author={Shokurov, V. V.},
   title={Letters of a bi-rationalist. V. Minimal log discrepancies and
   termination of log flips},
   language={Russian, with Russian summary},
   journal={Tr. Mat. Inst. Steklova},
   volume={246},
   date={2004},
   number={Algebr. Geom. Metody, Svyazi i Prilozh.},
   pages={328--351},
   issn={0371-9685},
   translation={
      journal={Proc. Steklov Inst. Math.},
      date={2004},
      number={3(246)},
      pages={315--336},
      issn={0081-5438},
   },
   review={\MR{2101303}},
}

\bib{Shok09}{article}{
   author={Shokurov, V. V.},
   title={Letters of a bi-rationalist. VII. Ordered termination.},
   language={Russian, with Russian summary},
   journal={Tr. Mat. Inst. Steklova},
   volume={264},
   date={2009},
   number={Mnogomernaya Algebraicheskaya Geometriya},
   pages={184--208},
   issn={0371-9685},
  
   review={2590847},
}

\bib{Zar62}{article}{
   author={Zariski, Oscar},
   title={The theorem of Riemann-Roch for high multiples of an effective
   divisor on an algebraic surface},
   journal={Ann. of Math. (2)},
   volume={76},
   date={1962},
   pages={560--615},
   issn={0003-486X},
   review={\MR{0141668}},
   doi={10.2307/1970376},
}

\end{biblist}
\end{bibdiv}

\end{document}